\setlist[enumerate,1]{label={(\roman*)}}
\setlist[enumerate,2]{label={(\alph*)}}
\setlist[enumerate,3]{label={(\arabic*)}}
\setlist[itemize]{nolistsep,noitemsep, topsep=0pt}
\newcommand{\cA}{\ensuremath{\mathcal A}}
\newcommand{\cB}{\ensuremath{\mathcal B}}
\newcommand{\cC}{\ensuremath{\mathcal C}}
\newcommand{\cD}{\ensuremath{\mathcal D}}
\newcommand{\cF}{\ensuremath{\mathcal F}}
\newcommand{\cH}{\ensuremath{\mathcal H}}
\newcommand{\cK}{\ensuremath{\mathcal K}}
\crefname{equation}{}{}
\theoremstyle{plain}
\newtheorem{theorem}{Theorem}[section]
\newtheorem{proposition}[theorem]{Proposition}
\newtheorem{lemma}[theorem]{Lemma}
\newtheorem{corollary}[theorem]{Corollary}
\newtheorem{problem}[theorem]{Problem}
\theoremstyle{definition}
\newtheorem{remark}[theorem]{Remark}
\DeclarePairedDelimiter{\parens}{(}{)}
\DeclarePairedDelimiter{\set}{\{}{\}}
\DeclarePairedDelimiter{\card}{\lvert}{\rvert}
\DeclarePairedDelimiter{\floor}{\lfloor}{\rfloor}
\DeclarePairedDelimiter{\ceil}{\lceil}{\rceil}
\DeclareMathOperator{\wt}{wt}
\DeclareMathOperator{\supp}{supp}
\newcommand{\F}{\ensuremath{\mathbb{F}}}
\newcommand{\Erdos}{Erd\H{o}s}
\newcommand{\Gyarfas}{Gy\'arf\'as}
\newcommand{\Szekeres}{Szekeres}
\begin{document}

\author{
Simona Boyadzhiyska\,\footnotemark[1] \qquad 
Shagnik Das\,\footnotemark[2] \qquad
Thomas Lesgourgues\,\footnotemark[3] \\
Kalina Petrova\,\footnotemark[4]
}

\footnotetext[1]{School of Mathematics, University of Birmingham, Edgbaston, Birmingham, B15 2TT, UK.  Email: {\tt s.s.boyadzhiyska@bham.ac.uk}}
\footnotetext[2]{Department of Mathematics, National Taiwan University, Taipei 10617, Taiwan. Email: {\tt shagnik@ntu.edu.tw}}
\footnotetext[3]{Department of Combinatorics and Optimization, University of Waterloo, Canada. Email: {\tt tlesgourgues@uwaterloo.ca}.}
\footnotetext[4]{Institute of Science and Technology Austria (ISTA), Klosterneurburg 3400, Austria. Email: {\tt kalina.petrova@ist.ac.at}.}

\sloppy

\title{Odd-Ramsey numbers of complete bipartite graphs}

\maketitle

\begin{abstract}
In his study of graph codes, Alon introduced the concept of the \emph{odd-Ramsey} number of a family of graphs $\cH$ in $K_n$, defined as the minimum number of colours needed to colour the edges of $K_n$ so that every copy of a graph $H\in \cH$ intersects some colour class in an odd number of edges. In this paper, we focus on complete bipartite graphs. First, we completely resolve the problem when $\cH$ is the family of all spanning complete bipartite graphs on~$n$ vertices. We then focus on its subfamilies, that is, $\{K_{t,n-t}\colon t\in T\}$ for a fixed set of integers $T\subseteq [\floor{n/2}]$. We prove that the odd-Ramsey problem is equivalent to determining the maximum dimension of a linear binary code avoiding codewords of given weights, and leverage known results from coding theory to deduce asymptotically tight bounds in our setting. We conclude with bounds for the odd-Ramsey numbers of fixed (that is, non-spanning) complete bipartite subgraphs.
\end{abstract}

\section{Introduction}

The classic problem in Ramsey Theory asks for the smallest integer $n$ such that every $r$-colouring of the edges of the complete graph~$K_n$ contains a monochromatic copy of $K_k$. Despite nearly a century of research, this problem remains poorly understood, even for small values of~$r$ or~$k$. In the $2$-colour case, the early work of \Erdos{}~\cite{erdos1947} and \Erdos{} and \Szekeres{}~\cite{erdos_combinatorial_1935} in the 1940s and 1930s respectively established exponential (in $k$) lower and upper bounds for $n$. Decades of subsequent efforts only saw lower order improvements to these bounds, until the very recent breakthrough of Campos, Griffiths, Morris, and Sahasrabudhe~\cite{campos2023exponential} (further optimised  by Gupta, Ndiaye, Norin, and Wei~\cite{gupta2024optimizing}) brought the first exponential improvement on the upper bound of~\Erdos{} and~\Szekeres{}. The gaps in the bounds are even wider at the other end of the spectrum, where we seek monochromatic triangles as the number of colours $r$ grows. The best known lower bound of $(321 - o(1))^{r/5}$ comes from blowing up a $5$-colouring of Exoo~\cite{exoo1994lower}, while the upper bound of $e r!$ follows from an argument of Schur~\cite{schur1916uber}. Determining the correct order of growth of this multicolour Ramsey number is a famous old problem of \Erdos{} with a \$250 prize attached~\cite{chung1997open}.

Even with the large gaps between the upper and lower bounds, Ramsey Theory has found many applications and, as a result, several natural variants of the problem have been considered over the years. One of these is the \emph{generalised Ramsey number} $f(G,H,q)$, defined to be the minimum number of colours needed in an edge-colouring of $G$ with the property that every copy of $H$ in $G$ receives at least $q$ different colours. Note that the classic Ramsey problem is concerned with avoiding monochromatic copies of a graph $H$, and thus corresponds to the case $q = 2$. This generalisation of requiring more colours in each copy of $H$ has a long and rich history too, including in particular the seminal article of Erd\H{o}s and Gy\'arf\'as~\cite{erdHos1997variant}, and the 
determination of the polynomial threshold for $f(K_n,K_p,q)$ by Conlon, Fox, Lee, and Sudakov~\cite{conlon2015erdHos}, with previous progress appearing in~\cite{eichhorn2000note,fox2009ramsey,mubayi1998edge}. We refer the reader to the recent breakthrough of Bennett, Cushman, Dudek, and Pra{\l}at~\cite{bennett2024erdHos} on $f(K_n,K_4,5)$, and of Bennett, Cushman, and Dudek~\cite{bennett2024generalized} on $f(K_n,K_5,8)$ for further references. In this article, we briefly use results on the bipartite case $f(K_n,K_{s,t},q)$, initiated by Chv\'{a}tal~\cite{chvatal1969finite} in relation to the Zarankiewicz problem, and studied further by Axenovich, F{\"u}redi, and Mubayi~\cite{axenovich2000generalizedbip}. 

Aside from simply counting the number of colours, one can further generalise this problem by imposing other conditions on the colour patterns appearing on copies of~$H$ in edge-colourings of a host graph~$G$. In his work on graph codes (which are families of graphs whose pairwise symmetric differences do not lie in a given collection of graphs $\cH$), Alon~\cite{alon2024graph} showed that bounds on the size of graph codes can be obtained by studying the parity of colour patterns on graphs in~$\cH$ in edge-colourings of $K_n$ (see Versteegen~\cite{versteegen2023upper} for a concise explanation of the connection between the problems). To that end, Alon defined the \emph{odd-Ramsey number} $r_{odd}(n, \cH)$ to be the minimum number of colours $r$ needed in an edge-colouring $G_1, G_2, \hdots, G_r$ of~$K_n$ with the property that every copy of a graph from $\cH$ intersects some~$G_i$ in an odd number of edges. For simplicity, we say that every copy of a graph from $\cH$ has an \emph{odd colour class}, and when $\cH = \set{H}$ consists of a single graph, we write $r_{odd}(n,H)$ instead.

Alon implicitly addressed the odd-Ramsey numbers of stars, matchings, and certain families of cliques in the proofs of Theorems~1.2 and~1.6 and Proposition~1.3 of~\cite{alon2024graph}. Many papers have subsequently considered the case of complete graphs, seeking to determine $r_{odd}(n,K_t)$, as that would give lower bounds on the size of the maximum graph codes of cliques, which Alon inquired about. Cameron and Heath~\cite{cameron2023new} proved that $r_{odd}(n,K_4) = n^{o(1)}$, after which Bennett, Heath, and Zerbib~\cite{bennett2023edgecoloring} and, independently, Ge, Xu, and Zhang~\cite{ge2023new} showed that $r_{odd}(n,K_5) = n^{o(1)}$. The latter set of authors also proved that $r_{odd}(n,K_5) = \Omega(\log n)$, and conjectured that $r_{odd}(n,K_t) = n^{o(1)}$ for all $t$; the former set of authors proved some general upper bounds in this case. Finally, Yip~\cite{yip2024k8} showed that $r_{odd}(n,K_8) = n^{o(1)}$.

Versteegen~\cite{versteegen2023upper}  considered the odd-Ramsey numbers of general graphs $H$. Note that the problem is trivial for graphs $H$ with an odd number of edges, as they will have an odd colour class in any edge-colouring. Thus, we may restrict our attention to graphs $H$ with an even number of edges. In this case, Versteegen proved the general lower bound of $r_{odd}(n,H) = \Omega( \log n)$ for any fixed graph $H$, and showed that this could be strengthened to the polynomial lower bound $r_{odd}(n,H) \ge n^{1/(v(H) - 2)}$ if $H$ can be decomposed into independent sets in a particular way. Versteegen showed that almost all graphs with an even number of edges satisfy this property, and Janzer and Yip~\cite{janzer2024} recently proved sharp bounds on the probability of the random graph~$G(n,p)$ having such a decomposition.

\subsection{Results}

In this paper, we study the odd-Ramsey numbers of complete bipartite graphs. We obtain results for the family of all spanning complete bipartite graphs on $n$ vertices, its subfamilies, and for individual fixed graphs $K_{s,t}$. In the process, we establish a somewhat surprising connection between odd-Ramsey numbers for spanning bipartite graphs and a parameter studied in coding theory.
\medskip

Our first result, \cref{prop:odd_ramsey_unbalanced_family}, determines precisely the odd-Ramsey number for the family of all spanning complete bipartite graphs on $n$ vertices. 
\begin{theorem}\label{prop:odd_ramsey_unbalanced_family}
Let $\cF = \{ K_{t,n-t}\colon t \in [n/2]\}$ be the family of all complete bipartite graphs on $n$ vertices. Then
\[r_{odd}(n,\cF) = \begin{cases}
    n-1 &\qquad\text{ if $n$ is even},\\
    n &\qquad\text{ if $n$ is odd}.
    \end{cases}\]
\end{theorem}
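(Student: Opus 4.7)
The plan is to translate the odd-Ramsey condition into a linear-algebraic problem over $\mathbb{F}_2$. Given an $r$-colouring of $E(K_n)$ with classes $G_1, \ldots, G_r$, I would define $\sigma \colon V(K_n) \to \mathbb{F}_2^r$ by letting $\sigma(v)_i$ be the parity of the number of edges of colour $i$ incident to $v$, and extend linearly to subsets via $\sigma(A) := \sum_{v \in A} \sigma(v)$. A short double-counting argument shows $\sigma(A)_i \equiv |G_i \cap E(A, V \setminus A)| \pmod{2}$, so the copy of $K_{|A|, n - |A|}$ spanned by the bipartition $(A, V \setminus A)$ has an odd colour class exactly when $\sigma(A) \neq 0$. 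Two observations will be used throughout: $\sigma(V) = 0$, because each edge contributes to both its endpoints, and each $\sigma(v)$ has weight congruent to $\deg(v) = n - 1 \pmod{2}$. Moreover, since $|E(A, V \setminus A)| = |A|(n - |A|)$ is already odd when $n$ is even and $|A|$ is odd, the colouring is valid if and only if $\sigma(A) \neq 0$ for all proper non-empty $A \subsetneq V$ with $|A|(n - |A|)$ even.

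For the lower bound when $n$ is odd, the required condition becomes $\sigma(A) \neq 0$ for every proper non-empty $A$, so the extended map $\sigma \colon \mathbb{F}_2^V \to \mathbb{F}_2^r$ has kernel exactly $\{\emptyset, V\}$ and image of size $2^{n-1}$. Since $n - 1$ is even, each $\sigma(v)$ lies in the even-weight subspace of $\mathbb{F}_2^r$, of size $2^{r-1}$; so does the image, forcing $r \geq n$. For $n$ even, I would introduce the lift $\tilde{\sigma} \colon V \to \mathbb{F}_2^{r+1}$ defined by $\tilde{\sigma}(v) = (\sigma(v), 1)$. Then $\tilde{\sigma}(A) = (\sigma(A), |A| \bmod 2)$ vanishes exactly when $|A|$ is even and $\sigma(A) = 0$, so the kernel of $\tilde{\sigma}$ is again $\{\emptyset, V\}$ and the image has size $2^{n-1}$. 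Each $\tilde{\sigma}(v)$ now has even weight $n$, so the image lies in the even-weight subspace of $\mathbb{F}_2^{r+1}$, of size $2^r$, giving $r \geq n - 1$.

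For the upper bound, I would exhibit explicit colourings realising a $\sigma$ of the required shape. When $n$ is odd, the colour classes are the $n$ near-perfect matchings $M_1, \ldots, M_n$ of a near-$1$-factorisation of $K_n$, with $M_i$ missing vertex $i$; a short parity computation shows $|M_i \cap E(A, V \setminus A)| \equiv |A| - 1 \pmod{2}$ if $i \in A$ and $\equiv n - |A| - 1 \pmod{2}$ otherwise, and these two residues differ because $n$ is odd, so some $M_i$ has odd intersection. When $n$ is even, I would colour the edge $\{v, n\}$ with colour $v$ for each $v \in [n - 1]$ and every edge of the induced $K_{n - 1}$ on $[n - 1]$ with colour $1$. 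A direct calculation gives $\sigma(v) = e_v$ for $v < n$ and $\sigma(n) = e_1 + \cdots + e_{n-1}$, and one verifies $\sigma(A) = 0$ only when $A \in \{\emptyset, V\}$.

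The main obstacle is bookkeeping rather than any individual step: one must juggle the weight of $\sigma(v)$ (fixed by $\deg(v) = n - 1$) with the set of subsets $A$ on which $\sigma(A) \neq 0$ is genuinely required (fixed by the parity of $|A|(n - |A|)$). These two parity conditions reinforce each other differently for $n$ odd and $n$ even, producing precisely the one-unit shift between the two cases of the theorem.
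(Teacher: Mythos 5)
Your proof is correct, but it follows a genuinely different route from the paper's. For the lower bound, the paper applies the Chevalley--Warning theorem to an explicit linear system over $\mathbb{F}_2$ (one equation per colour class, plus a normalising equation $\sum_{i=1}^{n-1}x_i=0$ to exclude the all-ones solution; for $n$ odd it drops one colour's equation and recovers that class's parity from the global count $t(n-t)\equiv 0$). Your argument instead studies the image of the degree-parity map $\sigma$ and exploits that it lies in the even-weight subspace (with the augmented coordinate $(\sigma(v),1)$ handling the even case) --- this is the dual, rank--nullity view of the same linear map, and is in fact essentially the $T=[\lfloor n/2\rfloor]$ specialisation of the coding-theoretic equivalence the paper proves later (its Theorem 1.3 and Proposition 3.2, where your $\sigma(v)$ are the characteristic vectors of odd-degree vertex sets). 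What your version buys is a uniform treatment that foreshadows the general equivalence; what the paper's version buys is a short self-contained argument requiring no discussion of weights or duals. For the upper bound the constructions also differ: the paper uses a rainbow Hamiltonian path (resp.\ a rainbow path on $n-1$ vertices) plus one residual colour, with a one-line "some path edge crosses the cut" verification, whereas you use a near-one-factorisation for $n$ odd and a rainbow star plus a monochromatic clique for $n$ even, verified through the map $\sigma$. I checked your parity computations (the matching counts $|A|-1$ versus $n-|A|-1$, and $\sigma(v)=e_v$, $\sigma(n)=e_1+\cdots+e_{n-1}$ in the even case) and they are all correct, so both routes are valid.
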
\medskip

We next focus on odd-Ramsey numbers of subfamilies of $\cF$. Given an integer $n \ge 1$ and a set $T \subseteq [n/2]$, we denote by $\cF_T$ the family of spanning complete bipartite graphs $\set{K_{t,n-t} : t \in T}$.

\begin{remark}\label{rem:OnlyEvenGraphs}
We emphasise once more that graphs with an odd number of edges are irrelevant in the determination of odd-Ramsey numbers. Indeed, let $\cH$ be a family of graphs, and suppose $H \in \cH$ has an odd number of edges. This trivially implies $H$ has an odd colour class in every colouring, and hence the copies of~$H$ do not pose any restrictions on the edge-colouring of $K_n$, and so we have $r_{odd}(n,\cH) = r_{odd}(n,\cH \setminus \set{H})$.

In particular, given $T \subseteq [n/2]$, if we set $S = \set{t \in T: t(n-t) \in 2\mathbb{Z}}$, then $r_{odd}(n, \cF_T) = r_{odd}(n, \cF_S)$. Thus, we shall henceforth assume $T$ only contains integers $t$ for which $t(n-t)$ is even. That is, if $n$ is even, then every $t \in T$ must also be even.
\end{remark}

We prove our results by establishing an equivalence between our odd-Ramsey problem and a well-known problem from coding theory. We briefly recall some basic terminology: a $k$-dimensional {\em linear binary code} of length $n$ is a $k$-dimensional subspace $\cC\subseteq \F_2^n$, and we refer to the elements of $\cC$ as {\em codewords}. The {\em(Hamming) weight} of a codeword $\mathbf{v}$, denoted $\wt(\mathbf{v})$, is the size of its support. 

For a given set $S\subseteq [n]$, we define $\ell(n,S)$ to be the largest dimension of any linear binary code~$\cC$ of length~$n$ with no codewords whose weight is in $S$; that is,
\[\ell(n,S)=\max\set*{\dim(\cC)\,\colon\,  \cC\subseteq \mathbb{F}^n_2\text{ is linear, } \wt(\mathbf{v})\not\in S \text{ for all } \mathbf{v}\in \cC}.\]
This parameter has been studied extensively for certain choices of $S$, see for instance~\cite{enomoto1987codes,mazorow1991extremal}; we collect some relevant results in~\cref{sec:PrelimCode}. Surprisingly, this function turns out to be closely related to the odd-Ramsey number of~$\cF_T$ in~$K_n$.

\begin{theorem}\label{thm:BipartiteGeneralEquivalence}
    For any integer $n\geq 1$ and set $T\subseteq [n/2]$ such that $t(n-t)$ is even for all $t\in T$, let~$W_T$ be the set $T\cup\{n-t\colon t\in T\}\cup\{n\}$. Then we have
        \[r_{odd}(n,\cF_T) = n-\ell(n,W_T).\]    
\end{theorem}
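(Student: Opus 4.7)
The plan is to translate the odd-Ramsey condition into a weight-avoidance condition on the kernel of an associated $\F_2$-matrix, and then match this up with the coding-theoretic quantity $\ell(n, W_T)$. Given an edge-colouring $G_1, \ldots, G_r$ of $K_n$, I form the \emph{degree-parity matrix} $M\in \F_2^{r\times n}$ with entries $M_{i,v} = \deg_{G_i}(v) \bmod 2$. For $A\subseteq V(K_n)$ with characteristic vector $\chi_A\in \F_2^n$, the identity $e_{G_i}(A, V\setminus A) = \sum_{v\in A}\deg_{G_i}(v) - 2e(G_i[A])$ shows that $(M\chi_A)_i$ records the parity of the number of edges of $G_i$ between $A$ and $V\setminus A$; so the odd-Ramsey property for $\cF_T$ is equivalent to $\ker M$ containing no $\chi_A$ with $|A|\in T$. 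Two handshake identities drive the rest: the identity $\sum_v\deg_{G_i}(v) = 2e(G_i)$ in each colour $i$ yields $\chi_V\in\ker M$, which forces the weight set of $\ker M$ to be symmetric about $n/2$ (so avoiding weights in $T$ is equivalent to avoiding weights in $T\cup(n-T)$); while the identity $\sum_i\deg_{G_i}(v) = n-1$ at each vertex $v$ gives $\sum_i \mathbf{d}_i = (n-1)\chi_V$, which for $n$ even places $\chi_V$ in the row space of $M$ and restricts $\ker M$ to even-weight vectors only, whereas for $n$ odd no basis of a non-zero subspace can sum to $0$ (else it would be linearly dependent), so realising zero row-sum requires at least one linearly dependent row and thus $r\geq \mathrm{rank}(M)+1$.

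For the lower bound, fix a valid colouring and set $d := \dim \ker M$. The subcode $\cC_0 := \{v \in \ker M : v_1 = 0\}$ has dimension $d-1$ (since $\chi_V\in\ker M$ has first coordinate $1$, so $\chi_V\notin \cC_0$), avoids all weights in $W_T = T\cup (n-T)\cup\{n\}$ (it inherits avoidance of $T\cup (n-T)$ from $\ker M$ and excludes the unique weight-$n$ vector $\chi_V$), and hence satisfies $\dim\cC_0\leq \ell(n, W_T)$. For $n$ odd this combines with $r\geq \mathrm{rank}(M)+1 = n-d+1$ to give $r\geq n - \ell(n, W_T)$. For $n$ even, a key observation sharpens the bound by one: any maximum-dimensional code $\cC^*\leq \F_2^n$ avoiding $W_T$ must contain an odd-weight vector, because by \cref{rem:OnlyEvenGraphs} the set $W_T$ consists only of even weights, so if $\cC^*\subseteq \chi_V^\perp$ then $\cC^*+\langle v\rangle$ for any odd-weight vector $v\notin \chi_V^\perp$ would still avoid $W_T$, contradicting maximality. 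Since $\cC_0$ additionally consists of even-weight vectors in this case, this observation yields $\dim\cC_0\leq \ell(n,W_T)-1$, so $d\leq \ell(n,W_T)$ and $r\geq n-d\geq n-\ell(n, W_T)$.

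For the matching upper bound, starting from $\cC^*\leq \F_2^n$ of dimension $\ell := \ell(n, W_T)$ avoiding $W_T$, I construct a target kernel $\tilde{\cC}$: for $n$ odd, $\tilde{\cC} := \cC^* + \langle \chi_V\rangle$ of dimension $\ell+1$; for $n$ even, $\tilde{\cC} := (\cC^*\cap \chi_V^\perp) + \langle \chi_V\rangle$ of dimension $\ell$ (using the above observation to guarantee $\dim(\cC^*\cap \chi_V^\perp) = \ell-1$). A direct check confirms that $\tilde{\cC}$ contains $\chi_V$, avoids $T\cup (n-T)$, and consists of even-weight vectors in the even case. I then choose rows $\mathbf{d}_1,\ldots,\mathbf{d}_r$ spanning $R := \tilde{\cC}^\perp$ whose sum equals $(n-1)\chi_V$: for $n$ even, a short change-of-basis argument produces a basis of $R$ with any prescribed non-zero target sum, so $r = \dim R = n-\ell$; for $n$ odd, since no basis sums to $0$, I append one extra (dependent) row equal to the basis sum, again achieving $r = n-\ell$.

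The remaining---and most technical---step is to realise the chosen mod-$2$ degree-parity vectors as an actual edge-partition of $K_n$. I would proceed greedily: for each $i < r$, select a small edge-disjoint subgraph $G_i$ with the prescribed degree parity $\mathbf{d}_i$ (for instance, by pairing up vertices of $\supp(\mathbf{d}_i)$ and joining each pair via a short path inside the remaining host graph), and let $G_r$ absorb the leftover edges; its degree parity is then automatically $\mathbf{d}_r$ by the sum constraint. The main obstacle is verifying that enough edge-disjoint paths survive at each greedy step; this holds comfortably due to the density of $K_n$ but requires care in the small-$n$ regime.
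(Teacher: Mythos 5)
Your translation of the odd-Ramsey condition into a weight-avoidance condition on $\ker M$, and the bookkeeping around $\chi_V$, are correct and in substance the same as the paper's argument: the paper handles the ``spare colour'' by noting that $t(n-t)$ even forces at least \emph{two} odd colour classes (so only $k-1$ of the parity vectors need be spanned), where you instead invoke the dependency $\sum_i\mathbf{d}_i=(n-1)\chi_V$; the paper adjoins $\mathbf{1}_n$ to the extremal code where you intersect with $\chi_V^\perp$. Your lower bound, including the even-$n$ observation that a maximum-dimensional code avoiding an all-even weight set must contain an odd-weight vector, is sound.

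The genuine gap is the final realisation step. You must exhibit an actual partition of $E(K_n)$ into $G_1,\dots,G_r$ whose degree-parity vectors are the prescribed $\mathbf{d}_1,\dots,\mathbf{d}_r$, and your greedy ``pair up $\supp(\mathbf{d}_i)$ and join each pair by a short edge-disjoint path'' is neither justified nor comfortable. The theorem must cover sets $T$ for which $\ell(n,W_T)=O(1)$ (essentially all of $[n/2]$, cf.\ \cref{prop:odd_ramsey_unbalanced_family}), so $r=n-\ell$ can be as large as $n-1$ while each $\supp(\mathbf{d}_i)$ can have size $\Theta(n)$; the paths alone would then require on the order of $n^2/2$ or more edges, which meets or exceeds $e(K_n)$, and the ``remaining host graph'' degenerates long before the greedy terminates. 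The paper sidesteps the general realisation problem by \emph{choosing the parity vectors wisely}: it puts a basis of the even code $\cC=\cB^{\perp}$ into systematic form $(I_{n-k-1}\mid M)$, so that each support $S_i$ is contained in $\{i\}\cup\{n-k,\dots,n\}$, whence the odd stars centred at $i$ with leaf set $S_i\setminus\{i\}$ are automatically pairwise edge-disjoint, realise exactly the required parities, and leave room for a final colour class absorbing the remaining edges. Some such choice of a structured spanning set of $R$ (or a genuine proof that every family of even-weight parity vectors with the correct sum is realisable as an edge-partition of $K_n$) is needed; as written, your construction is incomplete precisely at the step that carries the combinatorial content of the upper bound.
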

In particular, this theorem implies that, for any integers $n>t\geq1$ with $t(n-t)$ even, we have $r_{odd}(n,K_{t,n-t}) = n-\ell(n,\{t,n-t,n\})$. The main intuition behind this relation is that, given an $r$-colouring of $K_n$, one can build a $(r-1)$-dimensional binary linear code based on the vertices of odd degree in each colour. Asking for every copy of $K_{t,n-t}$ to contain an odd colour class turns out to then be equivalent to forbidding codewords of weights $t$ or $n-t$ in the dual of this code. With some care, we can reverse the construction and build colourings from binary codes.

Combining \cref{thm:BipartiteGeneralEquivalence} with appropriate results from coding theory allows us to derive concrete bounds on $r_{odd}(n,\cF_T)$. We highlight the most natural cases in the corollary below. In particular, we resolve the problem up to an additive constant when $n$ is large and $\max T$ is bounded by a constant.

\begin{corollary}\label{cor:SpecialCases}
    Let $T \subseteq [n/2]$ be such that $t(n-t)$ is even for every $t \in T$. 
    \begin{enumerate}[label=(\alph*)]
        \item If $T$ only contains odd integers, then $r_{odd}(n,\cF_T) = t+2$, where $t = \max T$. \label{SpecialCases:odd}
        \item If $T$ contains an even integer, let $2d$ be the largest even integer in $T$, and let $t = \max T$.\label{SpecialCases:even}
        \begin{enumerate}[label=(b\arabic*)]
            \item There is a constant $C_d$ depending only on $d$ such that \label{SpecialCases:asymp}
                \[ d \log n - C_d \le r_{odd}(n, \cF_T) \le d \log n + d + t + 2. \]
            \item Suppose $T = \{2d\}$. Then, if $n = 4d$, we have $r_{odd}(n, \cF_T) = 2d+1$. Otherwise, if $n > 4d$, we have $2d+2 \le r_{odd}(n, \cF_T) \le n-2d+1$.\label{SpecialCases:evensmall}
        \end{enumerate}
    \end{enumerate}
\end{corollary}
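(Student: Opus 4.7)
The approach is to invoke \cref{thm:BipartiteGeneralEquivalence} throughout, reducing each bound to an estimate of $\ell(n, W_T)$, where $W_T = T \cup \{n-t\colon t \in T\} \cup \{n\}$. Upper bounds on $r_{odd}$ correspond to lower bounds on $\ell$, which I would establish by explicit linear-code constructions; the matching lower bounds on $r_{odd}$ correspond to upper bounds on $\ell$, for which I would invoke the coding-theoretic non-existence results collected in \cref{sec:PrelimCode}.

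For \ref{SpecialCases:odd}, the parity requirement of \cref{rem:OnlyEvenGraphs} forces $n$ to be odd. Writing $t = \max T$, the forbidden set $W_T$ contains only odd integers at most $t$, even integers at least $n-t$, and $n$ itself. For the upper bound $r_{odd} \le t+2$, I would exhibit the $(n-t-2)$-dimensional linear code of even-weight vectors supported on the last $n-t-1$ coordinates: every non-zero codeword has even weight in $[2, n-t-1]$, hence avoids $W_T$. The matching lower bound amounts to $\ell(n, W_T) \le n-t-2$, which I would extract from the classical Enomoto–Frankl-type bounds on codes with forbidden weights applied to the pair $\{t, n-t\}\subseteq W_T$, combined with a parity refinement coming from the forbidden weight $n$.

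For \ref{SpecialCases:asymp}, set $2d = \max(T \cap 2\mathbb{Z})$. The upper bound on $r_{odd}$ would come from a primitive BCH code of designed distance $2d+1$, which has dimension at least $n - d\lceil \log_2(n+1)\rceil$ and minimum weight at least $2d+1$, so in particular avoids every even weight in $T$. Intersecting with the even-weight subcode (losing one dimension) then removes all odd weights in $T$, and zeroing out $t+1$ coordinates confines the remaining weights to an interval disjoint from $W_T$; the stated bound follows after bookkeeping the constants. The matching lower bound $\ell\le n-d\log n+C_d$ is a sphere-packing-type bound on codes that avoid weight $2d$, exploiting the additional structure coming from $n-2d, n \in W_T$. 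For \ref{SpecialCases:evensmall}, the lower bound on $\ell$ is realised in both subcases by the $(2d-1)$-dimensional code supported on the first $2d-1$ coordinates, whose weights lie in $[1, 2d-1]$ and thus miss $W_T = \{2d, n-2d, n\}$ entirely. The matching upper bound when $n=4d$ reduces to showing that any $(2d)$-dimensional subspace of $\F_2^{4d}$ avoiding the all-ones vector contains a codeword of weight $2d$; I would verify this through the MacWilliams identities, leveraging the fact that every binary self-dual code of length $4d$ contains $\mathbf{1}$. The looser upper bound when $n>4d$ would follow from an analogous MacWilliams/parity argument that now exploits the additional forbidden weight $n-2d$.

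The main obstacles I anticipate are the upper bounds on $\ell$, i.e.\ the lower bounds on $r_{odd}$: identifying the correct coding-theoretic input — in particular controlling the slack $C_d$ in \ref{SpecialCases:asymp} — and extracting enough algebraic structure from $W_T$ to apply it. By contrast, the lower bounds on $\ell$ all follow from three routine operations composed on classical codes: restricting to a coordinate subset, passing to the even-weight subcode, and invoking the BCH dimension estimate.
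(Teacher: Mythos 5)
Your overall route is the same as the paper's: everything is funnelled through \cref{thm:BipartiteGeneralEquivalence}, lower bounds on $\ell(n,W_T)$ come from explicit constructions (restriction of support, even-weight subcodes, shortened/extended BCH codes), and upper bounds on $\ell$ come from the coding-theoretic results of \cref{sec:PrelimCode}. Your constructions for part~\ref{SpecialCases:odd} and for the upper bound in~\ref{SpecialCases:asymp} are essentially the ones used in the paper (via \cref{thm:CT_results}\ref{item:CT_simple_lower_bound} and~\ref{item:CT_teven_nLarge}), and the bookkeeping you describe does recover $d\log n + d + t + 2$, modulo the small point that primitive BCH codes only exist in lengths $2^s-1$, so for general $n$ you must first shorten to length about $n-t-1$ (as the paper does) before extending by zero coordinates; this costs nothing essential.

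The genuine gap is in part~\ref{SpecialCases:evensmall}, in the upper bounds on $\ell$ (equivalently, the lower bounds on $r_{odd}$). The assertion that every $2d$-dimensional subspace of $\F_2^{4d}$ avoiding $\mathbf{1}_{4d}$ contains a codeword of weight $2d$ is precisely the theorem of Enomoto, Frankl, Ito, and Nomura (\cref{thm:CT_results}\ref{item:CT_Central}), and the argument you sketch does not prove it: the fact that every self-dual code of length $4d$ contains $\mathbf{1}$ is a statement about self-dual codes only, and a $2d$-dimensional code of length $4d$ avoiding weights $2d$ and $4d$ need not be self-orthogonal, so neither that fact nor a direct application of the MacWilliams identities (which merely relate the weight enumerators of $\cC$ and $\cC^\perp$, two a priori different codes here) yields the conclusion. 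The same issue affects your ``analogous MacWilliams/parity argument'' for $n>4d$, where the required input $\ell(n,\{2d,n-2d,n\})\le n-2d-2$ is Mazorow's result (\cref{thm:CT_results}\ref{item:CT_tEven}). These are the genuinely hard coding-theoretic inputs, and the paper imports them as black boxes; your proposal is complete only if you do the same, whereas the self-contained proofs you sketch for them would fail as written. A similar, milder remark applies to part~\ref{SpecialCases:odd}, where the bound $\ell(n,\{t,n-t,n\})= n-t-2$ for odd $n,t$ is again a cited result (\cref{thm:CT_results}\ref{item:CT_nOdd_tOdd}) rather than something that follows from a generic ``parity refinement.''
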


In particular, when $\mathcal{F}_T$ consists of a single graph $K_{t,n-t}$, we know $r_{odd}(n,K_{t,n-t})$ precisely when $t$ is odd or when $t = n/2$. More generally, we determine $r_{odd}(n,\cF_T)$ asymptotically when the set $T$ is fixed and~$n$ tends to infinity. Observe that, combining parts~\ref{SpecialCases:odd} and~\ref{SpecialCases:asymp} and the monotonicity of odd-Ramsey numbers, we obtain asymptotically tight bounds on $r_{odd}(n,\cF_T)$ for any constant~$d$ and any $t\notin\Theta(\log n)$; if $t=\Theta(\log n)$, our bounds show that the corresponding odd-Ramsey number is also $\Theta(\log n)$, but the constant factors in the two bounds differ  (see~\cref{problem:ExactSpanning} and the preceding discussion).\medskip

At the other end of the spectrum, one could ask for the odd-Ramsey numbers of fixed (that is, non-spanning) complete bipartite subgraphs instead. That is, what is $r_{odd}(n,K_{s,t})$? Naturally, we are only interested in this problem when $st$ is even; if both $s$ and $t$ are odd, then a single colour trivially suffices. Let us thus assume $s \le t$ and $st$ is even. As observed by Versteegen~\cite{versteegen2023upper}, we can obtain a lower bound via the K\H{o}v\'ari--S\'os--Tur\'an Theorem~\cite{kovari1954problem}. By their theorem, any subgraph of $K_n$ with at least $C_{s,t} n^{2-\frac{1}{s}}$ edges must contain a copy of $K_{s,t}$. Thus, if we have a colouring of $K_n$ with at most $cn^{\frac{1}{s}}$ colours, the densest colour class must contain a copy of the complete bipartite graph, and a monochromatic $K_{s,t}$ is an even~$K_{s,t}$. We therefore have 
\[ r_{odd}(n,K_{s,t}) \ge c n^{\frac{1}{s}}. \] 

For an upper bound, we can appeal to results on the generalised Ramsey number. Recall that $f(G,H,q)$ denotes the minimum number of colours needed in a colouring of $G$ with the property that every copy of~$H$ receives at least $q$ colours. 
Note that, if $K_{s,t}$ is coloured with at least $\frac12 st + 1$ colours, then there must be a colour that appears exactly once. Hence, using a general upper bound of Axenovich, F{\"u}redi, and Mubayi~\cite[Theorem~3.2]{axenovich2000generalizedbip}, we have
\[ r_{odd}(n,K_{s,t}) \le f(K_n, K_{s,t}, \tfrac12 st + 1) = O \parens*{n^{\frac{2s+2t-4}{st}}}. \]

These existing bounds leave us with a constant factor gap in the exponents. Our next result, which we prove by modifying the K\H{o}v\'ari--S\'os--Tur\'an proof for our purposes, goes some way towards narrowing this gap.
\begin{theorem}\label{thm:FixedBipGraphs}
    Let $s \le t$ be integers with $st$ even. Then
    \[ r_{odd}(n,K_{s,t})\geq (1 + o(1)) \parens*{\frac{n}{t}}^{1/\ceil{s/2}}\]
\end{theorem}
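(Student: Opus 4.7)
The plan is to adapt the K\H{o}v\'ari--S\'os--Tur\'an argument through a pair-matching gadget. Given any edge colouring $c$ of $K_n$ with $r=r_{odd}(n,K_{s,t})$ colours, for each pair $P=\{u,u'\}\subset V(K_n)$ I would consider the set $V_P=\{v\in V\setminus P:c(uv)=c(u'v)\}$ of vertices ``seeing the same colour'' on $P$, and for each vertex $v$ the set $M_v$ of pairs in $V\setminus\{v\}$ matching at $v$ (so $P\in M_v\iff v\in V_P$). The guiding observation is that if the $A$-part of a $K_{s,t}$ is a disjoint union of such pairs, each matching at every vertex of the $B$-part, then each pair contributes two edges of the same colour at every $v\in B$, so the resulting $K_{s,t}$ has no odd colour class, contradicting the choice of $r$. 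My goal is to produce such a structure.

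The counting step is a standard Jensen-type double count. By convexity on the $r$ colour-degree classes at each $v$, $|M_v|\geq r\binom{(n-1)/r}{2}=(1+o(1))n^2/(2r)$. For $k=\lfloor s/2\rfloor$ I would count the tuples $(v,\{P_1,\dots,P_k\})$ with each $P_i\in M_v$ in two ways:
\[ \sum_{\{P_1,\dots,P_k\}}\Bigl|\bigcap_{i=1}^k V_{P_i}\Bigr|=\sum_v\binom{|M_v|}{k}. \]
Applying Jensen on the right and averaging over the $\binom{\binom{n}{2}}{k}$ $k$-subsets of pairs (discarding the vanishing fraction of non-pairwise-disjoint ones) yields pairwise disjoint pairs $P_1,\dots,P_k$ whose common matching set $W$ satisfies $|W|\geq(1-o(1))\,n/r^k$. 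When $s$ is even, $k=\lceil s/2\rceil$ and $A=P_1\cup\dots\cup P_k$ is disjoint from $W$ by definition, so any $t$ vertices $B\subseteq W$ give the desired $K_{s,t}$; requiring $|W|\geq t$ then forces $r\geq(1-o(1))(n/t)^{2/s}$.

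The main obstacle is the odd-$s$ case (where $t$ is forced to be even): using $\lceil s/2\rceil$ pairs would give $|A|=s+1$, and forcing two pairs to share a vertex would create an odd colour class at the shared vertex. My workaround is to use only $k=(s-1)/2$ pairs, extend $A$ by a free vertex $u_0\notin\bigcup P_i$, and then select $B$ as the union of $t/2$ disjoint pairs $\{v,v'\}\subset W\setminus\{u_0\}$ with $c(u_0v)=c(u_0v')$; the $P_i$-pairs balance the colours coming from $A\setminus\{u_0\}$, while the $B$-pairs balance those coming from $u_0$. A second convexity count on $u_0$'s colour classes inside $W$ shows that $|W|\geq t+r+1$ suffices to find the $B$-pairs. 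Comparing with $|W|\geq(1-o(1))n/r^{(s-1)/2}$ and solving yields $r\geq(1-o(1))(n/t)^{2/(s+1)}$ (in fact with room to spare, since the bound on $|W|$ ends up being of order $tr$, well above the needed $t+r+1$), which together with the even case delivers the claimed $r_{odd}(n,K_{s,t})\geq(1+o(1))(n/t)^{1/\lceil s/2\rceil}$. The routine but fiddly part will be keeping the various lower-order terms (the $+r$ in the $B$-pair condition, the $o(1)$ loss from non-disjoint pair-sets, the exclusion of $u_0$ from $W$) all controlled inside the $(1+o(1))$ factor.
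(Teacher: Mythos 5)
Your proposal is correct and is essentially the paper's own argument viewed from the transposed side of the double count: your sets $M_v$ of monochromatically matching pairs at $v$, counted via convexity, are exactly the paper's construction of ``even $s$-neighbourhoods'' from $\floor{s/2}$ pairs of monochromatic edges, and your treatment of odd $s$ (an extra vertex $u_0$ on the $s$-side, with the $t$-side paired up according to the colour of the edge to $u_0$) matches the paper's pairing of the $t$-side vertices by their unique odd colour $c_v$. The remaining bookkeeping (non-disjoint pair-sets, additive $O(r+t)$ terms) is handled in the paper by choosing the pairs greedily with decremented degrees and absorbing everything into the $(1+o(1))$ factor, exactly as you anticipate.
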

When $s = 2$, the exponent of our lower bound matches that of the upper bound, showing that $r_{odd}(n, K_{2,t})$ is linear in $n$. This was previously known only for $s=t=2$, that is, for $4$-cycles, following the work of~\Erdos{} and~\Gyarfas{}~\cite{erdHos1997variant} on $f(n,4,5)$ and using a simple pigeonhole argument for the lower bound (see~e.g.~\cite[Section~4]{bennett2023edgecoloring}).

\paragraph{Organisation of the paper.} The rest of the paper is organised as follows. In \cref{sec:FamilyAllCompBip}, we prove \cref{prop:odd_ramsey_unbalanced_family}, which determines the odd-Ramsey number of the family of all spanning complete bipartite graphs. \cref{sec:SubFamiliesCompBip} focuses on subfamilies of that family, showing \cref{thm:BipartiteGeneralEquivalence} and \cref{cor:SpecialCases}. We next move to fixed-size bipartite graphs with the proof of \cref{thm:FixedBipGraphs} in \cref{sec:fixed_complete_bipartite_graphs}. Finally, we discuss some open problems and a further variant of odd-Ramsey numbers in \cref{sec:concluding_remarks}.

\paragraph{Notation.} 
Our graph notation is mostly standard. Unless otherwise specified, the vertex set of a graph on $n$ vertices is $[n]=\set*{1,\ldots,n}$. For simplicity, we denote by $[n/2]$ the set $[\floor{n/2}]=\{1,2,\ldots,\floor{n/2}\}$, irrespective of the parity of $n$. We often abuse notation and identify a graph with its edge set. For a graph $G$ and two disjoint sets of vertices $A,B\subseteq V(G)$, we write $e_G(A,B)$ for the number of edges in the bipartite subgraph of $G$ with vertex classes $A$ and $B$. For a subset $A\subseteq V(G)$, we write $e_G(A)$ for the number of edges of $G$ with both endpoints in $A$. We sometimes write $A\sqcup B$ instead of $A\cup B$ when we want to emphasise that the sets $A$ and $B$ are disjoint. 

For a vector $\mathbf{a} = (a_1,\dots, a_n)\in \F_2^n$, the {\em support} of $\mathbf{a}$, which we denote by $\supp(\mathbf{a})$, is the set of all indices corresponding to nonzero entries in $\mathbf{a}$, i.e., $\supp(\mathbf{a})=\set{i\colon a_i\neq 0}$. For a set $A\subseteq [n]$, the \emph{characteristic vector} $\mathbf{a}\in \F_2^n$ of the set $A$ is the vector whose support is $A$. Additionally, we denote the all-one vector of length $n$ by~$\mathbf{1}_n$, and the all-zero vector of length $n$ by~$\mathbf{0}_n$. Given vectors $\mathbf{x}=(x_1,\dots,x_n), \mathbf{y}=(y_1,\dots, y_n)$ in $\F_2^n$, we write $\mathbf{x}\cdot\mathbf{y}$ for the usual dot product in $\F_2^n$, that is, $\mathbf{x}\cdot\mathbf{y} = x_1y_1+\hdots+ x_ny_n$. All logarithms are to base $2$.

\section{All spanning complete bipartite graphs}\label{sec:FamilyAllCompBip}

We turn our attention to the family  {$\cF = \{ K_{t,n-t}\colon t \in [n/2]\}$} of all spanning complete bipartite graphs, proving~\cref{prop:odd_ramsey_unbalanced_family}.

\begin{proof}[Proof of \cref{prop:odd_ramsey_unbalanced_family}]
We  begin with  the upper bounds. Suppose first that $n$ is odd; our goal is to show that $r_{odd}(n,\cF)\leq n$. That is, we must prove the existence of an $n$-colouring of $K_n$ such that every copy of~$K_{t,n-t}$ with $t \in [n/2]$ contains an odd colour class. Take a Hamiltonian path~$P$ and give each of its edges a distinct colour, and then assign the remaining colour to all edges in $E(K_n) \setminus E(P)$. For any $t \in [n/2]$ and any copy $K$ of $K_{t,n-t}$ in $K_n$, at least one of the edges in $P$ must go between the parts of~$K$, and is then the unique edge of its colour in~$K$. This edge thus forms an odd colour class.

Now suppose that $n$ is even; the desired bound in this case is $r_{odd}(n,\cF) \le n-1$. We modify the colouring above to save a colour by replacing the path $P$ with a path $P'$ of length $n-2$ instead. Now consider any spanning complete bipartite graph $K$. If $K$ contains any edge of~$P'$, then we are done as before, and hence we may assume that all $n-1$ vertices of $P'$ are in the same part of~$K$. Thus,~$K$ is a copy of $K_{1,n-1}$, with all of its edges coming from $E(K_n) \setminus E(P')$. Therefore,~$K$ is monochromatic, and as $n$ is even, $K$ has an odd number of edges, so we again have an odd colour class.\medskip

For the lower bound, we modify the proof of~\cite[Theorem~1.6]{alon2024graph}. First suppose that $n$ is even, and let $G_1, \dots, G_{n-2}$ be any $(n-2)$-colouring of $K_n$. We shall find a nontrivial partition $A\sqcup B$ of $[n]$ such that every $G_s$ contains an even number of edges between $A$ and $B$. Recall the well-known Chevalley--Warning theorem (see e.g.~\cite{borevich1986number,schmidt2006equations}): over a finite field, if a system of polynomial equations in $n$ variables, whose sum of degrees is at most~$n-1$, admits a solution, then it also admits a second one. As in~\cite{alon2024graph}, we associate to each vertex $i$ a variable $x_i$ over $\mathbb{F}_2$, interpreting $x_i=0$ or $x_i=1$ to mean that the vertex $i$ belongs to $A$ or to $B$, respectively. Consider the following system of $n-1$ linear equations over $\mathbb{F}_2$. For each $1 \leq s \leq n-2$, we have the equation
$$ \sum_{ij \in E(G_s)} (x_i + x_j) = 0.$$
Additionally, we also have the equation
$$ \sum_{i=1}^{n-1} x_i = 0.$$
There are $n$ variables in this system of equations, and the sum of the degrees of these polynomials is~$n-1$, so the Chevalley--Warning theorem applies. Note that $x_i=0$ for all $i\in [n]$ is a solution of the system, implying that another solution must exist. Because of the equation $\sum_{i=1}^{n-1} x_i = 0$ and $n$ being even, the assignment $x_i=1$ for all $i\in [n]$ is not a solution. Thus, there must be a solution with $x_i =0$ for at least one $i$ and $x_j = 1$ for at least one $j$. Consider a spanning complete bipartite whose parts are the support of~$x$ and its complement. For each $G_s$ and each edge $ij\in E(G_s)$, we have $x_i + x_j = 1$ if and only if $x_i \neq x_j$, which happens if and only if $ij$ belongs to the copy of $K_{t,n-t}$ we are considering. Thus, the equations $ \sum_{ij \in E(G_s)} (x_i + x_j) = 0$ imply that each $G_s$ intersects the complete bipartite graph between $A$ and $B$ in an even number of edges.

If $n$ is odd, we modify this lower bound by considering $G_1, \dots, G_{n-1}$ and still showing that there is a copy of $K_{t,n-t}$ that contains an even number of edges from each $G_s$. We have the variables~$x_i$ over $\mathbb{F}_2$ for each vertex $i$ as before, and consider a very similar system of equations. For each $G_s$ with $1 \leq s \leq n-2$, we have the equation
$$ \sum_{ij \in E(G_s)} (x_i + x_j) = 0.$$
This is as before, except that we did not introduce an equation associated with our last graph~$G_{n-1}$. The last equation in our system is now 
$$ \sum_{i=1}^n x_i=0,$$
again ensuring that $x_i = 1$ for all $i$ is not a solution. The sum of the degrees of the polynomials is still~$n-1$, smaller than the number of variables. Since $x_i=0$ for all $i$ is still a solution, there must be another one, which again corresponds to a spanning complete bipartite graph. As before, for each $s \in [n-2]$, the graph~$G_s$ intersects $K$ in an even number of edges. Since $n$ is odd, the number of edges $t(n-t)$ of~$K_{t,n-t}$ is even. The colouring $G_1, \dots, G_{n-1}$ partitions the edges of~$K$, so since each $G_s$ with $s \leq n-2$ intersects~$K_{t,n-t}$ in an even number of edges, $G_{n-1}$ does as well.
\end{proof}

\section{Some spanning complete bipartite graphs}\label{sec:SubFamiliesCompBip}

We now focus on subsets $T \subseteq [n/2]$ and ask for a colouring of $K_n$ such that, for each $t \in T$, every copy of~$K_{t,n-t}$ contains an odd colour class. We first prove~\cref{thm:BipartiteGeneralEquivalence}, showing that $r_{odd}(n, \cF_T)$ relates to a problem from coding theory. We start by introducing some key concepts required for the proof of this statement.

Recall that a {\em linear binary code} of length $n$ and dimension~$k$ is a linear subspace $\cC$ of dimension $k$ in~$\mathbb{F}^n_2$. The \emph{weight} of a codeword $\mathbf{v}$, denoted by $\wt(\mathbf{v})$, is the number of ones it contains. A binary code is called \emph{even} if it contains only even-weight vectors.  The \emph{dual code} of $\cC$, denoted $\cC^{\perp}$, is defined to be the orthogonal complement of $\cC$ in $\F_2^n$, that is, 
\( \cC^{\perp} = \{\mathbf{x} \colon \mathbf{x} \cdot \mathbf{c} = 0\text{ for all } \mathbf{c} \in \cC \}.\)
It follows that the dual of a linear code of length $n$ and dimension $k$ is itself a linear code of length $n$ and dimension $n-k$. Moreover, a linear binary code is even if and only if its dual contains the vector $\mathbf{1}_n=(1,\ldots,1)$.

\subsection{General equivalence result}

We are now ready to prove~\cref{thm:BipartiteGeneralEquivalence} in two steps, first proving a lower bound on $r_{odd}(n,\cF_T)$. Recall that, given a set $T\subseteq[n/2]$, we write $W_T$ for the set $T\cup\{n-t\colon t\in T\}\cup\{n\}$.

\begin{proposition}\label{prop:GeneralEquivalence_Lower}
    For any integer $n\geq 1$ and set $T\subseteq [n/2]$ such that $t(n-t)$ is even for all $t\in T$, we have
    \[r_{odd}(n,\cF_T) \geq n-\ell(n,W_T).\]
\end{proposition}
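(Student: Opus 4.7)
The plan is to construct, from any valid $r$-colouring $G_1,\dots,G_r$ of $K_n$, a linear binary code $\cC\subseteq \F_2^n$ of dimension at least $n-r$ avoiding all weights in $W_T$. For each colour $s$, let $A_s\subseteq [n]$ be the set of vertices of odd degree in $G_s$ and let $\mathbf{a}_s\in\F_2^n$ be its characteristic vector; by the handshake lemma each $\mathbf{a}_s$ has even weight. The key identity, valid for any $X\subseteq[n]$ with characteristic vector $\mathbf{x}$, is
\[
    e_{G_s}(X,[n]\setminus X)\;\equiv\;\sum_{i\in X}\deg_{G_s}(i)\;\equiv\;\mathbf{a}_s\cdot\mathbf{x}\pmod{2}.
\]
Hence the $K_{|X|,n-|X|}$ on parts $(X,[n]\setminus X)$ has some $G_s$ intersecting it in an odd number of edges if and only if $\mathbf{x}\notin \cA^\perp$, where $\cA:=\langle \mathbf{a}_1,\dots,\mathbf{a}_r\rangle$, so validity of the colouring is equivalent to $\cA^\perp$ containing no vectors of weight in $T$.

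Set $\cB:=\cA+\langle\mathbf{1}_n\rangle$ and $\cC_0:=\cB^\perp$. Summing degrees at each vertex gives $\sum_s\mathbf{a}_s=(n-1)\mathbf{1}_n$: if $n$ is odd this equals $\mathbf{0}$, a non-trivial linear dependency forcing $\dim\cA\leq r-1$, while if $n$ is even it places $\mathbf{1}_n$ inside $\cA$. In either case $\dim\cB\leq r$, and so $\dim\cC_0\geq n-r$. Since $\mathbf{1}_n\in\cB$, the code $\cC_0$ lies in $\mathbf{1}_n^\perp$ and is therefore even-weight. If $\cC_0$ contained any $\mathbf{x}$ of weight $t\in T$ or $n-t$, the parity identity would force every $G_s$ to contribute an even number of edges to the $K_{t,n-t}$ on parts $\supp(\mathbf{x})$ and its complement, contradicting validity; hence $\cC_0$ avoids all weights in $T\cup(n-T)$.

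What remains is the weight-$n$ condition, which distinguishes the two parities. For $n$ odd, $\mathbf{1}_n$ has odd weight and therefore does not lie in the even code $\cC_0$, so taking $\cC:=\cC_0$ already yields a code of dimension at least $n-r$ avoiding $W_T$. For $n$ even, by contrast, $\mathbf{1}_n$ itself is a codeword of $\cC_0$ (it is even and orthogonal to every $\mathbf{a}_s$), and this is the main obstacle. By \cref{rem:OnlyEvenGraphs}, $T$ consists of even integers when $n$ is even, so $W_T\subseteq 2\mathbb{Z}$. We then pick any hyperplane $\cD\subset\cC_0$ with $\mathbf{1}_n\notin\cD$ and set $\cC:=\cD+\langle\mathbf{e}_1\rangle$, where $\mathbf{e}_1$ is the first standard basis vector. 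Since $\cD$ is even-weight and $\mathbf{e}_1$ is odd, $\mathbf{e}_1\notin\cD$, so $\dim\cC=\dim\cD+1=\dim\cC_0\geq n-r$. The codewords of $\cC$ split into $\cD$ (a subcode of $\cC_0$ not containing $\mathbf{1}_n$, hence avoiding $W_T$) and the coset $\cD+\mathbf{e}_1$, all of whose elements have odd weight and therefore lie outside the purely even set $W_T$.

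The step we expect to be most delicate is precisely the $n$-even case just described: the natural construction $\cA^\perp$ inevitably contains $\mathbf{1}_n$, and the obvious remedies (intersecting with $\mathbf{1}_n^\perp$ or deleting the line through $\mathbf{1}_n$) each cost a dimension we cannot afford. The trick of pairing a hyperplane of $\cC_0$ with a single odd-weight vector exploits the fact that $W_T$ is supported entirely on even weights when $n$ is even, so adding an odd-weight coset cannot introduce any new forbidden codewords.
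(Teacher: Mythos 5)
Your proof is correct, and while it shares the paper's basic framework --- encode each colour by the characteristic vector $\mathbf{a}_s$ of its odd-degree vertices, use $e_{G_s}(X,[n]\setminus X)\equiv\mathbf{a}_s\cdot\mathbf{x}\pmod 2$ to translate validity of the colouring into the dual code avoiding weights in $T\cup(n-T)$ --- the two places where the crucial dimension is gained are handled genuinely differently. The paper notes that, since $t(n-t)$ is even, every forbidden bipartition must have at least \emph{two} odd colour classes, so one colour can be discarded outright; the code is then spanned by only $k-1$ vectors, the dual has dimension at least $n-k+1$, and the weight-$n$ obstruction is removed by taking a basis of the dual containing $\mathbf{1}_n$ and deleting that basis vector, which works uniformly in both parities of $n$. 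You instead keep all $r$ vectors and exploit the identity $\sum_s\mathbf{a}_s=(n-1)\mathbf{1}_n$: for odd $n$ this is a nontrivial dependency forcing $\dim\cA\le r-1$, and for even $n$ it places $\mathbf{1}_n$ inside $\cA$ for free, so $\dim(\cA+\langle\mathbf{1}_n\rangle)\le r$ either way. Your fix for the weight-$n$ issue when $n$ is even --- trading a hyperplane of $\cC_0$ avoiding $\mathbf{1}_n$ for the span of $\mathbf{e}_1$, which cannot introduce forbidden weights because $W_T\subseteq 2\mathbb{Z}$ in that case --- is a different trick from the paper's and is the most original part of your argument. Note that both proofs do use the hypothesis that $t(n-t)$ is even, just at different points: the paper to obtain two odd colour classes, you (via \cref{rem:OnlyEvenGraphs}) to ensure $W_T$ is purely even when $n$ is even. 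The only cosmetic gap is that you do not explicitly dispose of degenerate cases (e.g.\ $r\ge n$ or $T=\emptyset$, where the bound is trivial or $\cC_0$ could in principle be too small to contain a hyperplane missing $\mathbf{1}_n$), but these cause no real difficulty.
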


\begin{proof}
    Let $k=r_{odd}(n,\cF_T)$. It is easy to see that $k=1$ if and only if $T=\emptyset$, in which case $W_T=\{n\}$ and we trivially have $\ell(n,W_T)=n-1$. 

    Assume now that $k>1$. Let $G_1,\ldots,G_k$  be a $k$-colouring of $K_n$ such that, for every $t\in T$, every copy of~$K_{t,n-t}$ contains an odd colour class. Let $S_i$ be the set of vertices of odd degree in the graph $G_i$. Note that the Handshaking Lemma implies that $|S_i|$ is even for all $i\in[k]$. For $i\in[k-1]$, let $\mathbf{s}_i\in\mathbb{F}_2^n$ be the characteristic vector of $S_i$, and let $\cC=\mathrm{span}(\mathbf{s}_1,\ldots,\mathbf{s}_{k-1})$. Note that $\cC$ is an even linear code of length $n$ and dimension at most $k-1$. 

    Fix $t\in T$. By definition of the colouring, for every partition $A\sqcup B=[n]$ with $|A|=t$ and $|B|=n-t$, there exists an $i\in[k]$ such that $e_{G_i}(A,B)$ is odd. Since $t(n-t)$ is even, there must in fact be two odd colour classes, and hence we may take $i \in [k-1]$. Now, since $e_{G_i}(A,B)=\sum_{v\in A}\deg_{G_i}(v)-2e_{G_i}(A)$, for $e_{G_i}(A,B)$ to be odd, we must have that $|A \cap S_i|$ is odd. As $|S_i|$ is even, it then follows that $|B \cap S_i|$ is odd as well. By considering the characteristic vectors of $A$ and $B$, it follows that, for every vector $\mathbf{a}$ of weight $t$ or $n-t$, there exists some $i\in[k-1]$ such that $\mathbf{a}\cdot \mathbf{s}_i = 1$. Therefore, $\mathbf{a}\not\in\cC^{\perp}$, and thus $\cC^{\perp}$ contains no codeword of weight $t$ or $n-t$, for every $t \in T$. 

    Given that $\cC$ is an even linear code, we know that $\mathbf{1}_n\in\cC^{\perp}$. Let $\mathbf{v}_1,\dots,\mathbf{v}_r$ be a basis for $\cC^{\perp}$ with~$\mathbf{v}_r = \mathbf{1}_n$. Note that, since $\cC$ has dimension at most $k-1$, $\cC^{\perp}$ has dimension at least $n-k+1$. Let $\cD = \mathrm{span}(\mathbf{v}_1,\ldots,\mathbf{v}_{r-1})$, and observe that~$\cD$ has dimension at least $n-k$. Since $\cC^\perp$ contains no codeword of length~$t$ or~$n-t$, and $\mathbf{1}_n \notin \cD$, it follows that $\cD$ contains no codeword with weight in $W_T$. Hence 
    \[\ell(n,W_T)\geq \dim \cD \ge n-k = n-r_{odd}(n,\cF_T).\qedhere\]

\end{proof}

The following proposition implies the other direction, namely that $r_{odd}(n,\cF_T)\leq n-\ell(n,W_T)$.
\begin{proposition}\label{prop:GeneralEquivalence_Upper}
For integers $n>k\geq 1$ and a set $T\subseteq [n/2]$, assume that there exists a linear binary code $\cA$ of length $n$ and dimension $k$ such that $\cA$ contains no codeword of weight in $W_T$. Then
\[r_{odd}(n,\cF_T)\leq n-k. \]
\end{proposition}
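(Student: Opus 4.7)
The plan is to invert the argument of \cref{prop:GeneralEquivalence_Lower}, building an explicit $(n-k)$-colouring of $K_n$ whose colour classes have prescribed odd-degree vertex sets. Since $n\in W_T$ we have $\mathbf{1}_n\notin\cA$, so $\cB:=\cA+\langle\mathbf{1}_n\rangle$ has dimension $k+1$. A calculation parallel to the one in \cref{prop:GeneralEquivalence_Lower} shows that $\cB$ has no codeword of weight in $T\cup\{n-t:t\in T\}$: any codeword $\mathbf{a}+\mathbf{1}_n$ of such weight would force $\mathbf{a}\in\cA$ to have weight in $\{n-t:t\in T\}\cup T\subseteq W_T$, a contradiction. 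Passing to duals, $\cB^{\perp}$ has dimension $c-1$ where $c=n-k$, and every codeword of $\cB^{\perp}$ has even weight because $\mathbf{1}_n\in\cB$.

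Next, I realise a basis of $\cB^{\perp}$ as the odd-degree vertex sets of a partition of $E(K_n)$. The $(c-1)\times n$ matrix whose rows form a basis of $\cB^{\perp}$ has rank $c-1$, so some $c-1$ of its columns are linearly independent; after relabelling the vertices of $K_n$ (which affects neither the odd-Ramsey number nor the weight-avoidance condition) I may assume these are the first $c-1$ columns. Row-reducing produces a new basis $\mathbf{s}_1,\dots,\mathbf{s}_{c-1}$ of $\cB^{\perp}$ in systematic form, so $\supp(\mathbf{s}_i)=\{i\}\cup T_i$ for some $T_i\subseteq\{c,c+1,\dots,n\}$; the evenness of $\mathbf{s}_i$ then forces $|T_i|$ to be odd. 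I let $G_i$ be the star centred at $i$ with leaves $T_i$ for each $i\in[c-1]$, and set $G_c:=E(K_n)\setminus\bigcup_{i<c}G_i$. Because each $G_i$ (with $i<c$) only contains edges between the anchor $i\in[c-1]$ and vertices in $\{c,\dots,n\}$, and the anchors are all distinct, the stars are pairwise edge-disjoint; thus $G_1,\dots,G_c$ partitions $E(K_n)$ into $n-k$ colour classes, and by construction the odd-degree vertex set of $G_i$ is $\supp(\mathbf{s}_i)$ for each $i<c$.

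It remains to verify the odd-Ramsey property. Fix $t\in T$ and a bipartition $A\sqcup B=[n]$ with $|A|=t$, and let $\mathbf{a}\in\F_2^n$ be the characteristic vector of $A$. Exactly as in \cref{prop:GeneralEquivalence_Lower}, $e_{G_i}(A,B)\equiv\mathbf{a}\cdot\mathbf{s}_i\pmod 2$ for each $i\in[c-1]$. If no $G_i$ with $i<c$ were odd on $K_{A,B}$, then $\mathbf{a}\cdot\mathbf{s}_i=0$ for every $i\in[c-1]$, placing $\mathbf{a}\in(\cB^{\perp})^{\perp}=\cB$; but $\wt(\mathbf{a})=t\in T$ contradicts the weight-avoidance property of $\cB$ established above. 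Hence some $G_i$ forms an odd colour class, as required. The main obstacle in this plan is the reduction to systematic form: once the anchors $1,\dots,c-1$ are chosen so that the remaining coordinates absorb all the freedom in $\cB^{\perp}$, the star construction simultaneously delivers both the edge-disjointness of the colour classes and the prescribed odd-degree sets.
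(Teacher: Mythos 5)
Your proof is correct and follows essentially the same route as the paper's: adjoin $\mathbf{1}_n$ to $\cA$ to get $\cB$, take a systematic basis of the even code $\cB^{\perp}$, realise each basis vector as the odd-degree set of a star, and use the parity identity $e_{G_i}(A,B)\equiv\mathbf{a}\cdot\mathbf{s}_i\pmod 2$ to force $\mathbf{a}\in\cB$ and a forbidden weight. The only (harmless) cosmetic difference is that you spell out the weight-avoidance of $\cB$ and the oddness of $|T_i|$ slightly more explicitly than the paper does.
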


\begin{proof}
Since $\cA$ contains no codeword of weight $n$, we have $\mathbf{1}_n\not\in\cA$. Consider the $(k+1)$-dimensional linear code $\cB$ obtained from $\cA$ by adding the codeword $\mathbf{1}_n$. Since $\cA$ has length~$n$ and contains no words of weight $t$ or $n-t$ for every $t\in T$, it follows that~$\cB$ has the same properties. Finally, let $\cC=\cB^{\perp}$. Given that $\mathbf{1}_n\in\cB$, we know that $\cC$ is an even linear binary code of dimension $n-k-1$.

Let $\mathbf{s}_1,\ldots,\mathbf{s}_{n-k-1}\in \mathbb{F}_2^n$ be a basis for $\cC$. Up to row operations and a permutation of $[n]$, we can select this basis in such a way that 
\begin{equation}\label{eq:MatrixFormCode}
\begin{pmatrix}
    \mathbf{s}_1\\\mathbf{s}_2\\\vdots\\\mathbf{s}_{n-k-1}
\end{pmatrix}
= (I_{n-k-1}|M),    
\end{equation}
for some matrix $M\in \mathbb{F}_2^{(n-k-1)\times (k+1)}$. For each $i\in[n-k-1]$, let $S_i = \supp(\mathbf{s}_i)$.

It follows from~\cref{eq:MatrixFormCode} that $i\in S_i$ for all $i\in [n-k-1]$ and $i\notin S_j$ for any other $j\in [n-k-1]$. Let $G_i$ be the star-graph on vertex set $S_i$  with centre vertex $i$, and observe that the graphs~$G_i$ for $i\in [n-k-1]$ are all edge-disjoint. Since $\cC$ is an even code, each $S_i$ has even size, and hence each $G_i$ is an odd star. Therefore~$S_i$ is the set of all odd-degree vertices in $G_i$. Finally, let $G_{n-k}=K_n\setminus\bigcup_{i=1}^{n-k-1}E(G_i)$.

Fix $t\in T$ and assume that there exists a partition $A\sqcup B=[n]$, with $|A|=t$ and $|B|=n-t$, such that the complete bipartite graph with vertex classes~$A$ and~$B$ intersects each $G_i$ in an even number of edges, that is, $e_{G_i}(A,B)\equiv 0\pmod 2$  for all $i\in[n-k]$.
Now, we have $\sum_{v\in A}\deg_{G_i}(v) = 2 e_{G_i}(A)+e_{G_i}(A,B)$, and thus  $\sum_{v\in A}\deg_{G_i}(v) \equiv 0 \pmod 2$ for all $i\in [n-k]$. This in turn implies that $|A\cap S_i|\equiv 0\pmod 2$ for all $i\in [n-k-1]$. 
Therefore, the characteristic vector $\mathbf{a}\in \F_2^n$ of the set $A$ has weight $t$ and satisfies~$\mathbf{a}\cdot \mathbf{s}_i=0$ for all $i\in[n-k-1]$. Hence, $\mathbf{a}\in \cC^\perp$, a contradiction, since $\cB=\cC^{\perp}$ contains no codewords of weight in~$W_T$.

Therefore $G_1,\ldots,G_{n-k}$ is a colouring of $K_n$ such that, for every $t \in T$, every copy of $K_{t,n-t}$ has an odd colour class, and hence
\[r_{odd}(n,\cF_T)\leq n-k.\qedhere\]
\end{proof}

\subsection{Coding theory results}\label{sec:PrelimCode}

Enomoto, Frankl, Ito, and Nomura~\cite{enomoto1987codes}, Mazorow~\cite[Chapter 2]{mazorow1991extremal}, and Bassalygo, Cohen, and Zemor~\cite{bassalygo2000codes} proved 
a number of results related to codes avoiding specific weights. In the following theorem, we collect several results that we require.

\begin{theorem}\label{thm:CT_results}\hfill
\begin{enumerate}[label=(\alph*)]
    \item Let $n$ be an integer and $S\subseteq[n]$. Write $m$ for the smallest integer in~$S$ and~$r$ for the smallest \emph{even} integer in~$S$. Then\label{item:CT_simple_lower_bound}
    \[\ell(n, S)\geq \max \set{m-1, r-2}.\]
    \item For any odd integers $n$ and $t$ with $1\leq t < \frac{n}{2}$, \label{item:CT_nOdd_tOdd}
    \[\ell(n,{\set{t,n-t,n}})=n-t-2.\]
    \item For any integer $t\geq 1$,  \label{item:CT_Central}\[\ell(4t,{\set{2t,4t}})=2t-1.\]
    \item For any integers $n$ and $t$ such that $t$ is even and $1\leq t < \frac{n}{2}$,  \label{item:CT_tEven}
    \[t-1\leq \ell(n,{\set{t,n-t,n}})\leq n-t-2.\]
    \item  For any integer $d\geq 0$, there exists a constant $C_d>0$ such that, for all $n\geq 2d$, we have\label{item:CT_teven_nLarge}
    \[ n - d\log n - 3d - 1 \le \ell(n,\{1,\hdots, 2d\} \cup \{ n-2d, \hdots, n\}) \le \ell(n,\{2d,n-2d,n\}) \le n - d \log n + C_d. \]
\end{enumerate}
\end{theorem}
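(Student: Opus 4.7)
The plan is to prove each part separately, using direct constructions for the lower bounds and extension-plus-MacWilliams arguments for the upper bounds. Part (a) is immediate from two explicit subcodes of coordinate subspaces: $\mathrm{span}(\mathbf{e}_1, \ldots, \mathbf{e}_{m-1})$ has dimension $m-1$ and codeword weights in $\{0, 1, \ldots, m-1\}$, all disjoint from $S$; and the even-weight subcode of $\mathrm{span}(\mathbf{e}_1, \ldots, \mathbf{e}_{r-1})$ has dimension $r-2$, with every nonzero weight being an even integer strictly less than $r$, hence disjoint from $S$ as well.

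For part (b), the lower bound is attained by the subcode of even-weight vectors supported on $\{t+2, \ldots, n\}$. This has dimension $n-t-2$ (fix $t+1$ coordinates to zero, then impose one parity constraint), and since $n$ and $t$ are both odd, the maximum even weight available is $n-t-2$; thus the code avoids the odd weight $t$, the weight $n-t > n-t-2$, and the weight $n$. For the upper bound, suppose $\cC$ has dimension $n-t-1$ and avoids $\{t, n-t, n\}$. Since $\mathbf{1}_n \notin \cC$ (its weight is forbidden), I set $\cC' = \cC + \langle \mathbf{1}_n \rangle$, which has dimension $n-t$ and, being closed under the involution $\mathbf{v} \mapsto \mathbf{v} + \mathbf{1}_n$, has weight distribution symmetric under $w \mapsto n-w$, so $\cC'$ still avoids $t$ and $n-t$. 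The dual $(\cC')^\perp$ has dimension $t$ and, because $\mathbf{1}_n \in \cC'$, consists entirely of even-weight codewords. Using MacWilliams' identity, the count $A_t(\cC')$ is expressed as a character sum $\frac{1}{2^t}\sum_{\mathbf{d}\in (\cC')^\perp} K_t(\wt(\mathbf{d}))$ with $K_t$ the Krawtchouk polynomial; exploiting $K_t(0) = \binom{n}{t}$, the parity constraints on $(\cC')^\perp$, and the antisymmetry $K_t(n-s) = -K_t(s)$ valid for $t$ odd, one derives $A_t > 0$, contradicting the assumed avoidance.

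Parts (c), (d), and (e) follow similar patterns. For (c), the lower bound is inherited from (a). The upper bound $2t-1$ is the classical statement that no binary linear $[4t, 2t]$-code simultaneously avoids the central weight $2t$ and the full weight $4t$; this is proved in \cite{enomoto1987codes} via an extension-and-counting argument analogous to (b), specialised to the case where the forbidden weight equals half the length. For (d), the lower bound is again from (a), and the upper bound $n-t-2$ mirrors the MacWilliams argument of (b), adapted to $t$ even (where the parity bookkeeping for $(\cC')^\perp$ changes slightly but the conclusion $A_t>0$ persists). For (e), the lower bound construction is a binary BCH code of designed distance $2d+1$: for $n = 2^m-1$, such a code has dimension at least $n - d\log n$ and avoids weights $1, \ldots, 2d$. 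To additionally avoid the high weights $n-2d, \ldots, n$, one intersects with the even-weight subcode and uses a complementary shift, and absorbs the remaining additive slack for general $n$ into the constant. The matching upper bound $n - d\log n + C_d$ follows from linear programming (Delsarte) bounds on codes with forbidden weights, as developed in \cite{bassalygo2000codes}.

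The main obstacle lies in the upper bounds of parts (b)--(e): they all rest on careful MacWilliams/Krawtchouk calculations or, in the case of (e), on the LP bounds for codes with forbidden distances. These are technically involved but are standard in the cited coding-theoretic literature \cite{enomoto1987codes, mazorow1991extremal, bassalygo2000codes}, so the cleanest presentation is to give the constructions and the extension-by-$\mathbf{1}_n$ argument in detail while importing the weight-counting inequalities from those references.
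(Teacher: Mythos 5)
Your treatment of part (a) and of the lower bounds in (b)--(d) is correct and matches the paper's: explicit coordinate-subspace constructions (equivalently, repeated applications of part (a)), with the hard upper bounds in (b)--(d) imported, as in the paper, from Enomoto--Frankl--Ito--Nomura and Mazorow. One caveat on your MacWilliams/Krawtchouk sketch for the upper bound of (b): you assert that $A_t(\cC')>0$ follows from $K_t(0)=\binom{n}{t}$, the evenness of $(\cC')^\perp$, and the antisymmetry $K_t(n-j)=-K_t(j)$, but since $n$ is odd the even weights $j$ and $n-j$ never pair up inside an even dual code, and $\lvert K_t(j)\rvert$ can be as large as $\binom{n}{t}$, so positivity does not follow from the facts you list; this step is a citation in disguise, which is tolerable only because the paper likewise defers to Mazorow here.

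The genuine gap is in the lower bound of part (e). Your construction --- take the primitive BCH code of designed distance $2d+1$ and length $2^m-1$, pass to its even-weight subcode, and invoke the complementation symmetry $A_i=A_{n-i}$ --- does work when $n=2^m-1$ exactly (the narrow-sense BCH code contains $\mathbf{1}_n$, so $A_{n-i}=A_i=0$ for $1\le i\le 2d$, and the even subcode removes the odd-weight word $\mathbf{1}_n$). But for general $n$ you cannot ``absorb the remaining additive slack into the constant'': $\ell(n,\cdot)$ requires an actual linear code of length exactly $n$, and the natural way to produce one, shortening the BCH code of length $2^s-1\ge n$, destroys the high-weight avoidance. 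After shortening to length $n$ the code still has minimum weight at least $2d+1$, but a shortened codeword of weight $w\in\set{n-2d,\dots,n}$ comes from an original codeword of the same weight $w\le n\le 2^s-2$, and the complementation symmetry of the original code only rules out weights in $\set{2^s-1-2d,\dots,2^s-1}$; when $n$ is well below $2^s-1$ these intervals are disjoint, so such codewords can and generally do survive the shortening. The paper's fix is essential and absent from your argument: shorten all the way down to length $n-2d-1$ and then pad every codeword with $2d+1$ zero coordinates, so that the maximum achievable weight $n-2d-1$ lies below the forbidden range by construction. This is also what yields the explicit constant $-3d-1$ in the statement, which your argument does not establish.
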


\begin{proof}
A proof of~\ref{item:CT_simple_lower_bound} can be found in Mazorow's PhD thesis~\cite[Lemma~2.3]{mazorow1991extremal}. Given its simplicity, we include the proof for completeness. The  intersection of the hyperplanes $x_{m} = 0$, \dots, $x_n = 0$ in $\F_2^n$ is an $(m-1)$-dimensional linear code of length~$n$ in which every codeword has weight in $[m-1]$ and thus not in~$S$. Similarly, the hyperplanes \mbox{$x_{r}=0$,\dots, $x_n = 0$} together with $x_1+\dots+x_{r-1}=0$ intersect in an \emph{even} \mbox{$(r-2)$-dimensional} linear code whose weights belong to $[r-1]\cap 2\mathbb{Z}$ and are therefore again not in $S$.

Mazorow~\cite[Proposition 2.15]{mazorow1991extremal} also proved~\ref{item:CT_nOdd_tOdd}. Part~\ref{item:CT_Central} follows from Enomoto, Frankl, Ito, and Nomura~\cite[Theorem~1.1]{enomoto1987codes}, see also~\cite[Theorem~2.6]{mazorow1991extremal}. The inequalities of~\ref{item:CT_tEven} can be easily deduced from Propositions 2.15 and 2.17 in~\cite{mazorow1991extremal}, depending on the parity of $n$.

We now prove the last part. 
By monotonicity, we have \[\ell(n,\{1,\ldots,2d\}\cup\{n-2d,\ldots,n\})\leq \ell(n,\set{2d,n-2d,n})\leq \ell(n,\set{2d}).\]
Bassalygo, Cohen, and Zemor~\cite{bassalygo2000codes} proved that $\ell(n,\{2d\})=n-d\log n+O(1)$, which yields the upper bound. To prove the lower bound, they implicitly use a standard modification of BCH codes, which we marginally adapt to our setting. 
Let $s$ be the unique integer such that $2^{s-1}\leq n-2d \leq 2^s-1$, noting that $s \le \log n + 1$.  
Let~$\cA$ be a BCH code~\cite{bose1960class} of length $2^s-1$, dimension $2^s-1-ds$, and minimum weight at least $2d+1$. We {\em shorten} $\cA$, removing the last $2^s-1-(n-2d-1)$ coordinates of every codeword and keeping only the codewords that had a $0$ in each deleted coordinate. Note that we obtain a linear code of length $n-2d-1$, minimum weight at least $2d+1$, and dimension at least $n-2d-ds-1$ (for every deleted coordinate, the dimension decreases by at most $1$). Extend this code by adding  $2d+1$ coordinates set at $0$ to every codeword. We obtain a linear code of length $n$ and dimension at least $n-2d-ds-1$, in which  $2d<\wt(\mathbf{v})<n-2d$ for any $\mathbf{v}\in \cC$. Therefore
\[\ell(n,\{1,\ldots,2d\}\cup\{n-2d,\ldots,n\})\geq n-2d-ds-1 \ge n-d\log n - 3d - 1.\qedhere\]
\end{proof}

\subsection{Odd-Ramsey numbers}

We now use the equivalence result from~\cref{thm:BipartiteGeneralEquivalence} and the coding theory statements from~\cref{sec:PrelimCode} to prove~\cref{cor:SpecialCases}.

\begin{proof}[Proof of~\cref{cor:SpecialCases}]  
    For part~\ref{SpecialCases:odd}, note that $n$ must be odd since $t'(n-t')$ is even for all $t'\in T$. Recalling that $W_T = T \cup \{n-t' : t'\in T\} \cup \{n\}$,  part~\ref{item:CT_simple_lower_bound} of \cref{thm:CT_results} implies that
    $$ \ell(n,W_T) \geq  n-t-2, $$
    since $n-t$ is the smallest even integer in $W_T$. On the other hand,
    $$ \ell(n, W_T) \leq \ell(n, \{t,n-t,n\}) = n-t-2.$$
    by the monotonicity of $\ell$ and part~\ref{item:CT_nOdd_tOdd} of~\cref{thm:CT_results}. Thus, \Cref{thm:BipartiteGeneralEquivalence} gives
    $$ r_{odd}(n,\mathcal{F}_T) = n - \ell(n, W_T) = t+2.$$

    Moving on to part~\ref{SpecialCases:asymp}, we begin with the lower bound. Recall that $2d$ is defined to be the largest even integer in $T$. Since $t(n-t)$ is even for all $t\in T$,  either $n$ is odd or $T$ contains only even integers. It follows immediately from~\cref{thm:BipartiteGeneralEquivalence} and~\cref{thm:CT_results}\ref{item:CT_teven_nLarge}  that there exists a constant~$C_d$ such that
    \begin{equation}\label{eq:lowerbound}
        r_{odd}(n,K_{2d,n-2d}) = n-\ell(n,\{2d,n-2d,n\}) \geq d\log n - C_d. 
    \end{equation}   
    Using the monotonicity of the odd-Ramsey number, we obtain \[r_{odd}(n,\cF_T)\geq r_{odd}(n,K_{2d,n-2d}) \geq d \log n - C_d.\]
     
    We now show the upper bound. Set $n'=n-t+2d \leq n$ and let $\cC$ be the linear code  of length $n'$ giving the lower bound on $\ell(n', \{1,\dots,2d\} \cup \{n'-2d, \dots, n'\})$ in \cref{thm:CT_results}\ref{item:CT_teven_nLarge}. Then $\cC$ has length $n-t+2d$, dimension at least $n-d \log n - t - d - 1$, minimum weight at least $2d+1$ and maximum weight at most $n - t - 1$. In particular, $\cC$ contains no codeword with weight in $\{1,\ldots,2d\}\cup \{n-t,\ldots,n\}$. Therefore, any codeword in $\cC$ with weight in $W_T$ has an odd weight. Let~$\cC'$ be the sub-code of $\cC$ obtained by keeping only the codewords with even weights. Then $\cC'$ is a linear code with length $n-t+2d$ and dimension at least $n- d \log n - t - d - 2$, in which every codeword has weight not in $W_T$. Extend this code by adding  $t-2d$ coordinates set to $0$ to every codeword to obtain
    \[\ell(n,W_T)\geq n-d \log n - t - d - 2,\]
    
    and by~\cref{thm:BipartiteGeneralEquivalence}, we have
    \[r_{odd}(n,\cF_T) =n-\ell(n,W_T)\leq d\log n + t + d + 2.\]

    Finally, part~\ref{SpecialCases:evensmall} follows directly from \Cref{thm:BipartiteGeneralEquivalence} and parts~\ref{item:CT_Central} and~\ref{item:CT_tEven} of~\cref{thm:CT_results}.
\end{proof}

We now present a purely combinatorial proof of~\cref{eq:lowerbound}, which does not use the coding theory connection and may be of independent interest.

\begin{proof}[Alternative proof of~\cref{eq:lowerbound}]
Let $G_1, \dots, G_r$ be a colouring of $K_n$ with $r$ colours such that every copy of~$K_{2d,n-2d}$ contains an odd colour class. For a vertex $v$, define the \emph{type} of $v$ as the vector $\tau_v 
 = (\deg_{G_1}(v) \mod 2, \hdots, \deg_{G_r}(v) \mod 2) \in \mathbb{F}_2^r$.

    Suppose we have a set $A$ of $2d$ vertices such that $\sum_{v \in A} \tau_v = \mathbf{0}_r$. Consider the copy of $K_{2d,n-2d}$ with vertex classes $A$ and $B = [n] \setminus A$. For every $i \in [r]$, we have $e_{G_i}(A,B) = \sum_{v \in A} \deg_{G_i}(v) - 2 e_{G_i}(A) \equiv 0 \mod 2$, since $(\sum_{v \in A} \tau_v)_i = 0$. Thus, we reach a contradiction.

    Hence, we must have a multiset $T'$ of $n$ vectors (counting with multiplicity) in $\mathbb{F}_2^r$ such that no $2d$ of them sum to $\mathbf{0}_r$. Let $T$ be the set of distinct vectors appearing in $T'$. For each vector $\tau \in T$, let $m_{\tau}$ be the multiplicity of $\tau$ in $T'$. Observe that, if $\sum_{\tau} (m_{\tau} - 1) \ge 2d$, then we can find $d$ pairs of repeated vectors, which gives a set of $2d$ vectors whose sum is $\mathbf{0}_r$. Thus, $\sum_{\tau} (m_{\tau} - 1) < 2d$, and it follows that $|T| > n-2d$.

    Now consider the Cayley graph $\Gamma = \Gamma(\mathbb{F}_2^r, T)$, where for every $x \in \mathbb{F}_2^r$ and $\tau \in T$, we add an edge $\{x,x+\tau\}$, and colour it $\tau$. Note that this is a properly edge-coloured graph on $N = 2^r$ vertices, with minimum degree at least $n-2d$. Note that a rainbow cycle of length $2d$ in $\Gamma$ corresponds to $2d$ different vectors in~$T$ summing to $\mathbf{0}_r$. Thus, we may assume $\Gamma$ has no rainbow cycle of length $2d$, and then it follows from a result of Janzer~\cite{janzer2023rainbow} that $e(\Gamma) = O_t(N^{1 + 1/d})$. That implies $n-2d \le C_d' N^{1/d} = C_d' 2^{r/d}$, whence we have~$r \ge d \log n - C_d$, for some constants~$C'_d$ and~$C_d$.
\end{proof}

\section{Fixed complete bipartite graphs}
\label{sec:fixed_complete_bipartite_graphs}

Hitherto we have studied the odd-Ramsey numbers of spanning complete bipartite graphs. We now focus on odd-Ramsey numbers of fixed (that is, non-spanning) complete bipartite graphs, proving~\cref{thm:FixedBipGraphs}. The proof is a modification of the K\H{o}v\'ari--S\'os--Tur\'an proof, tailored to our setting; we first require a definition. For a fixed colouring of a graph $G$, and a vertex $v\in V(G)$, we say a set $S\subseteq V(G)$ of $s$ vertices is an \emph{even $s$-neighbourhood of $v$} if there is at most one colour for which $v$ has odd degree to $S$. 

    \begin{lemma}\label{lem:even-nbhds}
    In every $r$-colouring of $K_n$, every vertex $v$ has at least
    \[ \frac{(n-s)^{\ceil{\frac{s}{2}}} (n-r-s)^{\floor{\frac{s}{2}}}}{r^{\floor{\frac{s}{2}}} s!} \]
    even $s$-neighbourhoods.
    \end{lemma}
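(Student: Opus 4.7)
For each colour $i \in [r]$, write $N_i = N_i(v)$ for the set of vertices joined to $v$ by an edge of colour $i$, and $d_i = |N_i|$, so that $\sum_i d_i = n-1$. The plan is to count ordered $s$-tuples of distinct non-$v$ vertices $(u_1, \ldots, u_s)$ such that, for every $k \in [\floor{s/2}]$, the edges $vu_{2k-1}$ and $vu_{2k}$ share a colour; we then divide by $s!$, the maximum multiplicity with which any set of size $s$ arises as such a tuple. Every such tuple yields an even $s$-neighbourhood: each of the $\floor{s/2}$ monochromatic pairs contributes $2$ to a single $|N_i \cap S|$ and $0$ to the rest, so all colour counts from the pairs are even; and when $s$ is odd, the additional vertex $u_s$ flips the parity of exactly one colour count, still leaving at most one colour with odd degree to $S$.

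The central step is a uniform lower bound, depending only on $k$ and not on the history, for the number of ways to extend by the $k$-th pair. Given $U = \{u_1, \ldots, u_{2(k-1)}\}$ with $U_i = |U \cap N_i|$, the number of ordered extensions $(u_{2k-1}, u_{2k})$ equals $\sum_{i \in [r]} (d_i - U_i)(d_i - U_i - 1)$. Since $\sum_i (d_i - U_i) = n - 2k + 1$, the Cauchy--Schwarz inequality gives
\[
\sum_i (d_i - U_i)^2 \;\geq\; \frac{(n-2k+1)^2}{r},
\]
and hence the number of extensions is at least $\frac{(n-2k+1)(n-2k+1-r)}{r}$, independently of $U$. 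Multiplying these bounds over $k = 1, \ldots, \floor{s/2}$ and using the crude but convenient inequalities $n - 2k + 1 \geq n - s$ and $n - 2k + 1 - r \geq n - r - s$ produces at least
\[
\frac{(n-s)^{\floor{s/2}} (n-r-s)^{\floor{s/2}}}{r^{\floor{s/2}}}
\]
valid ordered tuples of length $2\floor{s/2}$. If $s$ is odd, the final entry $u_s$ may be chosen as any of the at least $n-s$ still-unused non-$v$ vertices, contributing a further factor of $n-s$. Dividing by $s!$ yields the claimed bound.

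The only delicate point is verifying the parity condition for odd $s$, but this is immediate from the construction. Everything else reduces to a per-step Cauchy--Schwarz estimate; the fact that this estimate is uniform in $U$ is precisely what allows the recursion to telescope cleanly into the stated product form.
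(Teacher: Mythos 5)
Your proposal is correct and follows essentially the same argument as the paper: build the even $s$-neighbourhood by choosing $\lfloor s/2\rfloor$ ordered monochromatic pairs at $v$ (plus one extra vertex when $s$ is odd), bound each extension step uniformly by $\sum_i (d_i-U_i)(d_i-U_i-1)\ge \frac{(n-2k+1)(n-2k+1-r)}{r}$, and divide by $s!$ for overcounting. The only cosmetic difference is that you invoke Cauchy--Schwarz where the paper cites convexity; these give the identical estimate $\sum_i x_i^2 \ge (\sum_i x_i)^2/r$.
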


We defer the proof of~\cref{lem:even-nbhds} to the end of this section, first proving that it implies~\cref{thm:FixedBipGraphs}.

\begin{proof}[Proof of~\cref{thm:FixedBipGraphs}]
    Suppose we have an $r$-colouring of $K_n$ where every copy of $K_{s,t}$ contains an odd colour class. 
    Assuming~\cref{lem:even-nbhds}, we will double-count the set
    \[ \mathcal{E} = \set{(v,S): v \in V(K_n), S \textrm{ an even $s$-neighbourhood of $v$}}. \]

    By the lemma, since every vertex has many even $s$-neighbourhoods, as long as $r=o(n)$ we have 
    \[ \card{\mathcal{E}} \ge \frac{n (n-s)^{\ceil{\frac{s}{2}}} (n-r-s)^{\floor{\frac{s}{2}}}}{r^{\floor{\frac{s}{2}}} s!} = (1 + o(1)) \frac{n^{s+1}}{r^{\floor{\frac{s}{2}}} s!}. \]

    For an upper bound on $\card{\mathcal{E}}$, we consider how often a set $S$ of $s$ vertices can be an even neighbourhood of a vertex. In the case when $s$ is even, observe that if $S$ is an even $s$-neighbourhood of $v$, then every colour-degree of $v$ into $S$ must be even. Hence, if there is a set $T$ of $t$ such vertices, then $S \cup T$ induces an even copy of $K_{s,t}$. Thus, we must have $\card{\mathcal{E}} \le (t-1) \binom{n}{s}$.

    If $s$ is odd (and therefore $t$ even), then for every vertex $v$ for which $S$ is an even $s$-neighbourhood, there is a unique colour $c_v$ in which $v$ has odd colour-degree to $S$. If we have a set $T$ of $t$ such vertices $v$, such that every colour appears an even number of times in $(c_v : v \in T)$, then $S \cup T$ induces an even copy of~$K_{s,t}$. The existence of such a set $T$ is guaranteed if we have at least $r+t$ vertices $v$ for which $S$ is an even $s$-neighbourhood, since then we can find $\frac{t}{2}$ disjoint pairs of vertices $u,v$ with $c_u = c_v$. Thus, in the odd case we have $\card{\mathcal{E}} \le (r+t-1)\binom{n}{s}$.

    Unifying the cases, we have the bound
    \[ \card{\mathcal{E}} \le tr^{2\parens{\frac{s}{2} - \floor{\frac{s}{2}}}} \frac{n^s}{s!}. \]

    Combining our upper and lower bounds on $\card{\mathcal{E}}$, we obtain
    \[ (1 + o(1)) \frac{n^{s+1}}{r^{\floor{\frac{s}{2}}} s!} \le t r^{2\parens{\frac{s}{2} - \floor{\frac{s}{2}}}} \frac{n^s}{s!}. \]

    Solving for $r$ yields $r^{s - \floor{\frac{s}{2}}} \ge (1 + o(1)) \frac{n}{t}$, which simplifies to give the desired lower bound on $r_{odd}(n, K_{s,t})$.
\end{proof}

To complete the proof, we establish the lemma.
\begin{proof}[Proof of \cref{lem:even-nbhds}]
    We build even $s$-neighbourhoods of $v$ by picking $\floor{\frac{s}{2}}$ (ordered) pairs of monochromatic edges incident to $v$, and then picking one final vertex if needed (when $s$ is odd).

    Suppose we have already picked $j \ge 0$ pairs, and let $d_i^{(j)}$ be the degree of $v$ in the colour $i$ after removing the vertices from the first $j$ pairs. Then, the number of choices for the next pair is given by
    \[ \sum_{i=1}^r d_i^{(j)} \parens{d_i^{(j)} - 1}. \]
    By convexity, this is at least
    \begin{align*}
        r \parens*{ \frac{ \sum_{i=1}^r d_i^{(j)}}{r}}\parens*{ \frac{ \sum_{i=1}^r d_i^{(j)}}{r} - 1} &= r \parens*{\frac{n-2j-1}{r}} \parens*{\frac{n-r-2j-1}{r}} \\
        &\ge \frac{(n-s)(n-r-s)}{r}.
    \end{align*}

    Thus, there are at least $\parens*{ \frac{(n-s)(n-r-s)}{r}}^{\floor{\frac{s}{2}}}$ ways to pick the $\floor{\frac{s}{2}}$ pairs. If $s$ is odd, then we can add one of the remaining $n-s$ vertices, which results in a unique odd colour class. Since each even $s$-neighbourhood can be counted this way in at most $s!$ different orders, it follows that we have at least
    \[ \frac{(n-s)^{\ceil{\frac{s}{2}}} (n-r-s)^{\floor{\frac{s}{2}}}}{r^{\floor{\frac{s}{2}}}s!}\]
    distinct even $s$-neighbourhoods, as required.
\end{proof}

\section{Concluding remarks}
\label{sec:concluding_remarks}

In this article, we studied the odd-Ramsey number for spanning complete bipartite graphs, and subfamilies thereof. In particular, for any integer $n\geq 1$ and any set $T\subseteq[n/2]$ such that $s(n-s)$ is even for every~$s \in T$, if $t=\max T$ and $2d$ is the largest even integer in $T$ (or $d=0$ if $T$ contains only odd integers), it follows from~\cref{cor:SpecialCases} that
\[\max\set*{d\log n-C_d,t+2}\leq r_{odd}(n,\cF_T) \leq d\log n+d+t+2,\] 
for some $C_d$. For constant $d$, these bounds are asymptotically tight: if $t=o(\log n)$, we have $r_{odd}(n,\cF_T)=(d+o(1))\log n$, and if $t=\omega(\log n)$, then $r_{odd}(n,\cF_T)=(1+o(1))t$. However, if $t=\Theta(\log n)$ and $T$ contains an even integer, we can only conclude that $r_{odd}(n,\cF_T)=\Theta(\log n)=\Theta(t)$. It would be desirable to understand $r_{odd}(n,\cF_T)$ better in this regime. When $d$ is allowed to grow with $n$, we do not have a good understanding of the behaviour of the corresponding odd-Ramsey number. The case where $T = \set{2d}$ is of particular interest.

\begin{problem}\label{problem:ExactSpanning}
    Determine the behaviour of $r_{odd}(n, K_{2d,n-2d})$.
\end{problem}\medskip

In the non-spanning case, there is considerable room for further investigation.
For integers $s \leq t$ with $st$ even, we proved that the odd-Ramsey number of~$K_{s,t}$ in~$K_n$ is bounded by
    \[ (1 + o(1)) \parens*{\frac{n}{t}}^{1/\ceil{s/2}}\leq r_{odd}(n,K_{s,t}) \leq O \parens*{n^{\frac{2s+2t-4}{st}}}. \]

For large complete bipartite graphs, if we fix an even $s$ and let $t$ go to infinity, then the exponent of our lower bound asymptotically matches that of the upper bound. However, there is still a considerable gap when $s$ is odd, or when $s$ and $t$ are comparable.

\begin{problem}\label{problem:Kst}
    Let $s,t\geq 3$ be fixed integers such that $st$ is even. What is the asymptotic behaviour of $r_{odd}(n,K_{s,t})$?
\end{problem}

\medskip

In~\cite{yip2024k8}, Yip introduced a natural variant of the odd-Ramsey problem. Given an edge-coloured~$K_n$, a copy of a graph $H$ in $K_n$ is called \emph{unique-chromatic} if there is a colour appearing on exactly one edge of the copy; the colouring is called \emph{$H$-unique} if all copies of $H$ in $K_n$ are unique-chromatic. The smallest number of colours needed for an $H$-unique colouring of $K_n$ is denoted by $r_{u}(n,H)$. For a family of graphs $\cH$, we define  $r_{u}(n,\cH)$ and $\cH$-unique colourings analogously. Note that we trivially have 
\[r_{odd}(n,H)\leq r_{u}(n,H)\leq f\parens*{K_n,H, \left\lfloor \tfrac12 e(H) \right\rfloor+1},\] for any integer $n$ and any graph $H$. 

It is natural to ask how tight the first inequality is. For spanning complete bipartite graphs, our proof of the upper bound in \cref{prop:odd_ramsey_unbalanced_family} yields an $n$-colouring of $K_n$ in which every spanning complete bipartite graph is unique-chromatic. In the even case, we can use a slightly more sophisticated construction to again show that the two Ramsey numbers are the same.
\begin{theorem}\label{thm:unique_ramsey_family}
Let $\cF = \{ K_{t,n-t}\colon t \in [n/2]\}$ be the family of all complete bipartite graphs on $n$ vertices. Then
\[r_{u}(n,\cF) = r_{odd}(n,\cF) = \begin{cases}
    n-1 &\qquad\text{ if $n$ is even},\\
    n &\qquad\text{ if $n$ is odd}.
    \end{cases}\]
\end{theorem}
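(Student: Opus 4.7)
The plan is to prove both inequalities separately. The lower bound $r_u(n, \cF) \geq r_{odd}(n, \cF)$ is immediate from the general comparison noted just before the theorem, so \cref{prop:odd_ramsey_unbalanced_family} already supplies it. The substance of the theorem is the matching upper bound.

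For odd $n$, I would simply observe that the colouring used to prove the upper bound in \cref{prop:odd_ramsey_unbalanced_family} is already $\cF$-unique. That colouring assigns a distinct colour to each of the $n-1$ edges of a Hamiltonian path $P$ and one further colour to every non-path edge. Every path edge then has a colour occurring nowhere else in $K_n$, so in any copy of $K_{t,n-t}$ the crossing path edge---which exists since $P$ is spanning and connected---is the unique edge of its colour within the copy.

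For even $n$, the analogous shorter-path construction from \cref{prop:odd_ramsey_unbalanced_family} leaves the $K_{1,n-1}$ star at the vertex $v_n$ isolated from $P'$ monochromatic, so it is not $\cF$-unique. I would replace it with a \emph{cherry} construction: take the path $P' = v_1 v_2 \ldots v_{n-1}$ on the first $n-1$ vertices and, for each $i \in [n-2]$, let colour class $i$ consist of the two edges $\{v_i v_{i+1}, v_{i+1} v_n\}$, which form a cherry at $v_{i+1}$. All remaining edges (the edge $v_1 v_n$ and the chords internal to $P'$) initially receive colour $n-1$, except that one carefully chosen chord incident to $v_{n-1}$ is recoloured to an earlier colour. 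The decisive structural observation is that, for each unmodified cherry colour $c$, both of its edges are incident to $v_{c+1}$, and so colour $c$ appears exactly once in a copy of $K_{t,n-t}$ with bipartition $(A,B)$ if and only if $v_c$ and $v_n$ lie on opposite sides. Consequently some cherry colour furnishes a unique colour in every $K_{t,n-t}$ except those in which $v_n$ shares a side with all of $v_1, \ldots, v_{n-2}$; the only valid bipartition satisfying this constraint is the $K_{1,n-1}$ star at $v_{n-1}$, which is precisely the case that the chord modification is chosen to handle.

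The main obstacle is selecting the modified chord and its new colour so that the star at $v_{n-1}$ gains a unique colour while uniqueness is preserved both at the other endpoint of the modified chord and in every newly affected $K_{t,n-t}$ copy with $t \geq 2$. For small cases this can be done with a single recolouring (for instance, sending $v_{n-3}v_{n-1}$ to colour $n-3$ works directly when $n \in \{4,6\}$), and for general even $n$ one verifies by a finite case analysis---if necessary iterating the same cherry-plus-modification idea at any remaining problematic star or $K_{2,n-2}$---that an $\cF$-unique colouring using $n-1$ colours can be obtained, matching the lower bound.
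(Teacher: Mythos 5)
Your lower bound ($r_u \ge r_{odd}$ plus \cref{prop:odd_ramsey_unbalanced_family}) and your treatment of odd $n$ (the Hamiltonian-path colouring is already $\cF$-unique) are both correct and match the paper. The gap is in the even case: your construction is never actually pinned down, and the natural single-chord modification you describe provably fails once $n\ge 8$. Concretely, suppose you recolour the chord $v_jv_{n-1}$ to a cherry colour $c\le n-3$. If $j\neq c$, consider the copy of $K_{2,n-2}$ with $A=\{v_c,v_{n-1}\}$: colour $c$ now contributes two crossing edges ($v_cv_{c+1}$ and $v_jv_{n-1}$), every other cherry colour contributes $0$ or $2$ crossing edges since $v_{c'}$ and $v_n$ lie on the same side, and the dump colour $n-1$ contributes $|A||B|$ minus an even number, which is even and at least $2$ for $n\ge 8$ --- so no colour appears exactly once. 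If instead $j=c$, the same failure occurs for the star at $v_c$: colours $c$ and $c-1$ each appear twice there and colour $n-1$ appears $n-5\ge 3$ times. Your own check that $v_{n-3}v_{n-1}\mapsto$ colour $n-3$ ``works for $n\in\{4,6\}$'' succeeds only because $n-5=1$ when $n=6$; it does not extend. The fallback of ``iterating the cherry-plus-modification idea'' is not substantiated and, as each patch enlarges a colour class and creates new copies to re-examine, it is not clear it terminates in a valid colouring; as written this is a hope, not a proof.

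For comparison, the paper's even-$n$ construction avoids this difficulty entirely by a different design: for $i\in\{2,\ldots,n/2\}$ the class $G_i$ is the single edge $\{i,n\}$, for $i\in\{n/2+1,\ldots,n-1\}$ the class $G_i$ is the cherry $\{1,i\},\{1,n-i+1\}$ centred at vertex~$1$, and $G_1$ is everything else. The singleton classes force all of $\{2,\ldots,n/2\}$ onto the side of $n$, the cherries then force $\{n/2+1,\ldots,n-1\}$ there too, so the only possibly bad copy is the star at vertex~$1$ --- and that star meets $G_1$ in exactly the one edge $\{1,n\}$, because every other edge at vertex~$1$ lies in a cherry class. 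If you want to salvage your approach, you would need to redesign the colouring so that the single remaining bad copy is handled structurally rather than by an unspecified sequence of local repairs.
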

\begin{proof}
    As explained above, it suffices to show that $r_{u}(n,\cF) \le n-1$ when $n$ is even. Consider the following colouring $G_1,\dots, G_{n-1}$ of $K_n$. For $i\in\{2,\ldots,n/2\}$, let $G_i$ be the single edge $\set{i,n}$. For $i\in\{n/2+1,\ldots,n-1\}$, let $G_i$ consist of the two edges $\set{1,i}$ and $\set{1,n-i+1}$. Finally, let $G_1$ contain all other edges.

    Assume that there exists a spanning complete bipartite graph $K$ with vertex classes $A$ and $B$ intersecting each colour class in either zero or at least two edges. Without loss of generality, assume that $n\in A$. For any $i\in\{2,\ldots,n/2\}$, we have $i\in A$, as otherwise $K$ intersects $G_i$ in the unique edge~$\set{i,n}$, forming a unique colour class. Then, for any $i\in\{2,\ldots,n/2\}$, we also have $(n-i+1)\in A$ as otherwise~$K$ intersects~$G_{n-i+1}$ in either the unique edge $\set{1,i}$ or the unique edge $\set{1,n-i+1}$. Then we obtain $\set{2,\ldots,n}\subseteq A$, and hence $B=\{1\}$; therefore, $K$ intersects~$G_1$ in the unique edge $\set{1,n}$, a contradiction.
\end{proof}\medskip

For subfamilies of $\cF$, our approach to studying odd-Ramsey numbers relies on~\cref{thm:BipartiteGeneralEquivalence}, providing a bridge to coding theory. While distinguishing between odd and even numbers translates elegantly to arithmetic over $\mathbb{F}_2$, there is no similar way to do this for unique colourings. The unique-colouring problem might require different methods, and we wonder to what extent it is similar to the odd-Ramsey problem and propose the following question. 
\begin{problem}
    How far apart can $r_{odd}(n, H)$ and $r_u(n, H)$ be for a graph $H$? For a family of graphs $\cH$?
\end{problem}

We close by showing that the odd- and unique-Ramsey numbers can have a large additive gap. Indeed, or the family $\cK$ of all cliques in $K_n$, Alon~\cite{alon2024graph} proved that $r_{odd}(n,\cK)=\floor{n/2}$. However, the colouring provided for the upper bound is not $\cK$-unique; our final result shows that a unique colouring requires almost twice as many colours.
\begin{theorem}
    For the family $\cK$  of all complete graphs,
    \[r_u(n,\cK)=n-1.\] 
\end{theorem}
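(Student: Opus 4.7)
The plan is to establish both inequalities $r_u(n, \cK) \leq n-1$ and $r_u(n, \cK) \geq n-1$ separately.

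For the upper bound, I would exhibit an explicit $(n-1)$-colouring of $K_n$: colour each edge $\set{i,j}$ with $i < j$ by the colour $i$, so the set of colours used is $\set{1, 2, \ldots, n-1}$. For any clique $K$ on vertices $v_1 < v_2 < \cdots < v_k$ with $k \geq 2$, the colour $v_{k-1}$ appears on exactly one edge of $K$, namely $\set{v_{k-1}, v_k}$, since every other edge of $K$ has its smaller endpoint strictly less than $v_{k-1}$. Hence every clique in $\cK$ is unique-chromatic under this colouring, witnessing $r_u(n, \cK) \leq n-1$.

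For the lower bound, I would proceed by induction on $n$, with the trivial base case $r_u(2, \cK) \geq 1$. For the inductive step, fix any $\cK$-unique colouring of $K_n$ achieving $r = r_u(n, \cK)$ colours. Since $K_n$ itself lies in $\cK$, some colour $c$ must appear on exactly one edge $\set{u,v}$ of $K_n$. Restricting the colouring to the induced subgraph $K_n[V(K_n) \setminus \set{u}] \cong K_{n-1}$ deletes this unique edge, so colour $c$ no longer appears and at most $r-1$ colours remain on $K_{n-1}$. This restricted colouring remains $\cK$-unique, because every clique in $K_{n-1}$ is also a clique in $K_n$ and therefore inherits the unique-chromatic property. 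The induction hypothesis then gives $r - 1 \geq r_u(n-1, \cK) \geq n-2$, so $r_u(n, \cK) \geq n-1$.

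Both steps are elementary, so I do not foresee any significant obstacle. The key observation in the lower bound is simply that the unique edge of colour $c$ in $K_n$ can be made to vanish by deleting one of its endpoints, reducing the problem cleanly to $K_{n-1}$ with one fewer colour and enabling the induction.
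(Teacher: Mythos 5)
Your proposal is correct and follows essentially the same approach as the paper: the upper bound uses the identical colouring by smaller endpoint, and your inductive lower bound (delete an endpoint of the unique-colour edge, losing one vertex and one colour) is the same mechanism the paper phrases as an iteration peeling $K_n$ down to a small clique.
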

\begin{proof}
For the upper bound, it is easy to see that the colouring $G_1,\ldots,G_{n-1}$ defined by $G_i=\set*{\set{i,j}\colon j>i}$ is a $\cK$-unique $(n-1)$-colouring of $K_n$; indeed, for any clique $K$, if $a$ is the second largest vertex in $V(K)$, then~$K$ intersects~$G_a$ in a single edge.

For the lower bound, let $G_1, \dots, G_{n-2}$ be a $(n-2)$-colouring of $K_n$. 
Starting with the clique $K = K_n$, we iterate the following process: whenever we have a colour class with a unique edge $xy$, we remove $x$ from the clique. Each step of the iteration removes one vertex and one colour from the clique. Thus, either the process terminates, in which case we have a clique without a unique colour class, or, after $n-3$ iterations, we are left with a copy of $K_3$ that has only one colour, which is again a clique without a unique colour class.
\end{proof}

\paragraph{Acknowledgements.}
The authors would like to thank Gilles Z\'emor for a helpful clarification on~\cite{bassalygo2000codes}.

S.B.: The research leading to these results was supported by EPSRC, grant no.\ EP/V048287/1.
There are no additional data beyond that contained within the main manuscript.

S.D.: Research supported by Taiwan NSTC grants 111-2115-M-002-009-MY2 and 113-2628-M-002-008-MY4.

K.P.: This project
has received funding from the European Union’s Horizon 2020 research and innovation programme
under the Marie Skłodowska-Curie grant agreement No 101034413.\includegraphics[width=5.5mm, height=4mm]{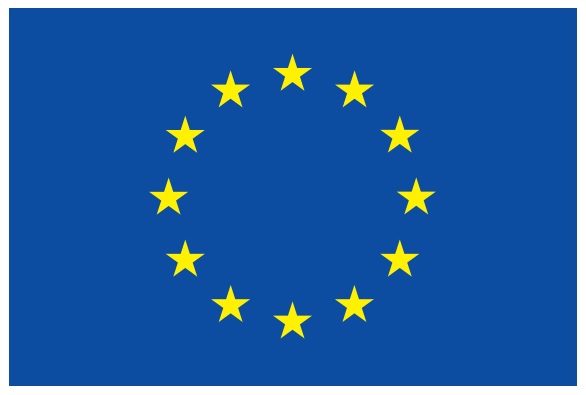} Parts of this research was conducted while K.P. was at the Department of Computer Science,
ETH Z\"urich, Switzerland, supported by Swiss National Science Foundation, grant no. CRSII5 173721.

\bibliographystyle{plain}
\bibliography{bibliography.bib}

\end{document}